\documentclass{amsart}

\usepackage{enumerate}
\usepackage{amssymb}
\usepackage{latexsym}
\usepackage{longtable}
\usepackage{times}
\usepackage{graphics}
\usepackage{multirow}
\usepackage{array}
\usepackage{tabularx}
\usepackage{amsmath}
\usepackage{geometry}
\usepackage{xcolor}
\allowdisplaybreaks[1]
\newtheorem{theorem}{Theorem}[section]
\newtheorem{lemma}[theorem]{Lemma}

\newtheorem{corollary}[theorem]{Corollary}
\theoremstyle{definition}

\newtheorem{example}[theorem]{Example}

\theoremstyle{remark}
\newtheorem{remark}[theorem]{Remark}
\theoremstyle{definition}

\begin{document}

\title[Some additive properties of $\mathbb{Z}_n$]
{Some additive properties of $\mathbb{Z}_n$ and $k$-index stability of finite groups}

\author{M.H. Hooshmand}

\address{Department of Mathematics, Shi.C., Islamic Azad University, Shiraz, Iran}

\email{MH.Hooshmand@iau.ac.ir , hadi.hooshmand@gmail.com}

%\subjclass[2010]{Primary 11B30, 11P70; Secondary 20D60, 05E15, 20K01, 05C69}
\subjclass[2010]{05E15, 20D60, 11B30, 11P70}
\keywords{subfactors of groups, subindices and indices of subsets, packing numbers, index stability, covering numbers}

\date{}

\begin{abstract}
We study some additive combinatorial properties of the group of
least nonnegative residues modulo $n$ that are related to index stability of groups and
packing numbers.
We prove several theorems that not only confirm a conjecture and resolve some open problems about index stability of such groups, but also provide
basic tools for the characterization of finite $k$-index stable groups.
As a consequence, we completely characterize all 2-element index stable subsets of $\mathbb{Z}_n$, obtain an exact closed formula for their density, and
determine all $n$ for which every 2-subset is index unstable. Finally, we present a problem and
a research project extending the study to 3-subsets and general $k$-subsets.
\end{abstract}

\maketitle

%\enlargethispage*{4mm}
%\thispagestyle{empty}

\section{Introduction and preliminaries} \label{Introduction}
In additive combinatorics, packing and covering numbers are fundamental tools for analyzing the structure of subsets of groups and their behavior under addition. Packing numbers measure the largest family of pairwise disjoint translates of a set, while covering numbers measure the minimum number of translates required to cover a given set. These notions have a natural geometric interpretation in terms of efficient packing and covering, and they play an important role in the study of sumsets, additive growth, and density phenomena
(see \cite{Furstenberg book, Lyaskovska2007, Protasov2011}). Recently, the notions of subfactors of subsets, subindices, and index stability in groups have been introduced and systematically developed (see \cite{Hooshmand2020, Hooshmand2023, HooshmandYousefian, Hooshmand2026arXiv, Kabenyuk}). These concepts have natural connections with additive combinatorics and number theory, particularly through their relationships with packing and covering numbers, independent sets in Cayley graphs, syndetic sets, and factorization problems in groups.
 Without entering into the general theory (to avoid making this paper too long),
we state the necessary concepts and notations for the important special group $\mathbb{Z}_n$ as follows.

Recall that $\mathbb{Z}_n=\{ 0, 1, 2, 3, 4, ..., n - 1\}$ is the group of
the least nonnegative residues modulo $n$ (with the binary operation $+_n$, see \cite{Hooshmand2021}),
while $\overline{\mathbb{Z}}_n=\{ \overline{0}, \overline{1}, \ldots , \overline{n-1}\} =\mathbb{Z}/n\mathbb{Z}$ denotes the quotient
 group of integers modulo $n$; these two groups are isomorphic, and we use $\mathbb{Z}_n$ throughout.

Let $A$ be a non-empty subset of $\mathbb{Z}_n$. A subset $B\subseteq \mathbb{Z}_n$ is a subfactor of $\mathbb{Z}_n$ relative to $A$
if and only if
\begin{equation}
(A-A)\cap (B-B)=\{0\} , (A-A)+B=\mathbb{Z}_n.
\end{equation}
It is worth noting that $(1)$ holds if and only if $A b\cap A\beta=\emptyset$, for all distinct elements $b,\beta\in B$,
and $B$ is maximal with this property.\\
By $|\mathbb{Z}_n:A|^-$ (resp. $|\mathbb{Z}_n:A|^+$) we mean the minimum (resp. maximum)
of values of $|B|$ where $B$ satisfies $(1)$. Also, $A$ is called index stable (in $\mathbb{Z}_n$)
if $|\mathbb{Z}_n:A|^-=|\mathbb{Z}_n:A|^+$, and if it is the case, then the common value is denoted by $|\mathbb{Z}_n:A|$.
It is important to know that this notation completely agrees with the index of a subgroup $H$ (i.e., $|\mathbb{Z}_n:H|$) if
$A=H\leq \mathbb{Z}_n$. The following classical bounds hold for $|\mathbb{Z}_n:A|^\pm$  (see \cite{Hooshmand2023}):
\begin{equation}
\Big\lceil\frac{n}{|A-A|}\Big\rceil\le |\mathbb{Z}_n:A|^-\le |\mathbb{Z}_n:A|^+\le \Big\lfloor \frac{n}{|A|}\Big\rfloor
\end{equation}
It is worth noting that all quantities in $(2)$ are equal to 1 if $|A|>\frac{n}{2}$ (in this case we have $A-A=\mathbb{Z}_n$).
Also, we have the following basic properties:
\begin{enumerate}[(a)]
\item If \(A\subseteq A'\subseteq \mathbb{Z}_n\), then \(|\mathbb{Z}_n:A|^+\geq|\mathbb{Z}_n:A'|^+\). \\
The analogous inequality for the lower index is false in general: for example, in
$\mathbb{Z}_{12}$ with $A=\{0,3,10\}\subseteq A'=\{0,3,9,10\}$ one has $2=|\mathbb{Z}_{12}:A|^-< |\mathbb{Z}_{12}:A'|^-=3$.

\item $|\mathbb{Z}_n:A|^+\ge |\mathbb{Z}_n:A-A|^+$ (the analogous inequality for the lower index
does not hold in general).
\item If $A-A$ is a subgroup, then $A$ is index stable and $|\mathbb{Z}_n:A|=|\mathbb{Z}_n:A-A|$.

\item There are the following important relations among the subindices of a subset $A$, the packing number of $A$, and
the covering number of $A-A$ (see \cite{Hooshmand2026arXiv}):
$$
\mathrm{cov}_{\mathbb{Z}_n}(A-A)\leq |\mathbb{Z}_n : A|^-\leq |\mathbb{Z}_n : A|^+=\mathrm{pack}_{\mathbb{Z}_n}(A).
$$
\item If $|A|>\frac{n}{3}$, then $A$ is index stable with indices 1 or 2.
\item If $(A-A)\cap (B-B)=\{0\}$ and $|B|=\lfloor \frac{n}{|A|}\rfloor$, then
($B$ is a subfactor relative to $A$ and so) $(A-A)+B=\mathbb{Z}_n$.
\end{enumerate}
One of the most important classes of subsets of $\mathbb{Z}_n$ for studying the subindices is the class of subsets
of the form $A=\{0,\ldots ,m\}$ where $m$ is an arbitrary element of $\mathbb{Z}_n$.
Since $|A|=m+1$ and $|A-A|=2m+1$, $(2)$ implies $\{0,\ldots ,m\}$ is index stable if equality $(3)$ holds
(we will show that the converse is also true in Theorem~\ref{mainthm}).
Thus, we arrive at a ceiling-floor equation (which is needed later) in the next lemma.
\begin{lemma}
For a fixed positive integer $m$, the set of all solutions of the equation
\begin{equation}
\Big\lceil\frac{n}{2m+1}\Big\rceil=\Big\lfloor\frac{n}{m+1}\Big\rfloor
\end{equation}
is
$$[m+1,3m+2]_\mathbb{Z}\cup\{4m+3\},$$
where $[a,b]_\mathbb{Z}$ denotes $[a,b]\cap \mathbb{Z}$.\\
Moreover
$$
\Big\{x\in \mathbb{Z}_+: x=\Big\lceil\frac{n}{2m+1}\Big\rceil=\Big\lfloor\frac{n}{m+1}\Big\rfloor, \text{ for some } m,n\in\mathbb{Z}_+ \Big\} =\{1,2,3\}.
$$
\end{lemma}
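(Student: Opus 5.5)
The plan is to parametrize by the common value. Write $x$ for the common value of both sides of $(3)$. Since $n$ and $m$ are positive integers, $\lceil n/(2m+1)\rceil\ge 1$, so $x\ge 1$. Unwinding the ceiling and the floor, $(3)$ with common value $x$ is equivalent to the pair of conditions
$$
(x-1)(2m+1)<n\le x(2m+1), \qquad x(m+1)\le n<(x+1)(m+1).
$$
So the solution set of $(3)$ is the union, over $x\ge 1$, of the integer intervals
$$
I_x=[(x-1)(2m+1)+1,\;x(2m+1)]_\mathbb{Z}\cap[x(m+1),\;(x+1)(m+1)-1]_\mathbb{Z}.
$$
The proof then reduces to deciding for which $x$ the set $I_x$ is non-empty, and computing it in those cases.

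\textbf{Step 1: bounding $x$.} We always have $x(m+1)\le x(2m+1)$. Hence $I_x\neq\emptyset$ exactly when the larger of the two left endpoints is at most the smaller of the two right endpoints. The only non-automatic requirement is
$$
(x-1)(2m+1)+1\le (x+1)(m+1)-1 .
$$
This simplifies to $2mx+x-2m\le mx+x+m$, that is, $mx\le 3m$, so $x\le 3$. Thus only $x\in\{1,2,3\}$ can occur. This step is the crux of the whole lemma, and it also gives the second assertion once we know each of these three values is actually attained.

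\textbf{Step 2: computing $I_1,I_2,I_3$.} For each value we compare the endpoints directly.
\begin{itemize}
\item For $x=1$ we get $[1,2m+1]\cap[m+1,2m+1]=[m+1,2m+1]_\mathbb{Z}$.
\item For $x=2$ we get $[2m+2,4m+2]\cap[2m+2,3m+2]=[2m+2,3m+2]_\mathbb{Z}$.
\item For $x=3$ we get $[4m+3,6m+3]\cap[3m+3,4m+3]=\{4m+3\}$.
\end{itemize}
The union of these three sets is $[m+1,3m+2]_\mathbb{Z}\cup\{4m+3\}$, which proves the first assertion.

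\textbf{Step 3: the set of common values.} By Step 1 every common value lies in $\{1,2,3\}$. Conversely, by Step 2 each of $1,2,3$ is realized for every $m$: take $n=m+1$, $n=2m+2$ and $n=4m+3$ respectively. This gives the second assertion.

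The only real obstacle is bookkeeping with strict versus non-strict inequalities when translating ceilings and floors into integer intervals. The inequality in Step 1 must use the integer forms of the endpoints, not the real ones. Otherwise one would get $mx<3m+1$ instead of $mx\le 3m$, which only gives the same bound $x\le 3$ after an extra integrality argument.
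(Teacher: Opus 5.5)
Your proof is correct, but it takes a different and cleaner route than the paper's. You fix the common value $x$ and turn the ceiling and floor conditions into two integer intervals. Their intersection is non-empty exactly when $(x-1)(2m+1)+1\le (x+1)(m+1)-1$, i.e.\ $x\le 3$. Computing $I_1,I_2,I_3$ then gives both inclusions of the solution set and the set $\{1,2,3\}$ of common values at once.

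The paper splits the work into two directions:
\begin{enumerate}
\item For the easy inclusion, it checks the three ranges directly.
\item For the converse, it writes $n=(m+1)q+r=(2m+1)q'+r'$ and uses the equation to get $q=q'+1$ (or $q=q'$ when $r'=0$). It then derives $m(q'-1)=r-r'+1$ and splits into cases according to which multiple of $m$ the quantity $r'-r-1$ can be, using the ranges of the remainders. Because it only bounds $r'-r-1$ above by $2m+1$ (the sharp bound is $2m-1$), its list of possible multiples is complete only for $m>1$, so $m=1$ has to be checked separately.
\end{enumerate}
The paper's remainder-based bookkeeping matches the $b$-part and remainder notation used later in the proof of Theorem~\ref{mainthm}. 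Your interval-intersection argument is shorter, uniform in $m$, and makes the bound on the common value the visible crux.

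One small slip in your closing remark does not affect the proof. With real endpoints the non-emptiness condition is $(x-1)(2m+1)<(x+1)(m+1)$, i.e.\ $mx<3m+2$, not $mx<3m+1$. Note that $mx<3m+1$ is already equivalent to $mx\le 3m$ over the integers. For $m=1$ the real condition admits $x=4$: the real intervals $(9,12]$ and $[8,10)$ meet but contain no common integer. That is exactly why the integer endpoints are needed.
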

\begin{proof}
First, considering the three cases $m+1\leq n\leq 2m+1$, $2m+2\leq n\leq 3m+2$, and $n=4m+3$,
one can check that such $n$ satisfy the equation and the outputs of the related brackets are $1,2$, and $3$ respectively. \\
Conversely, let $n$ satisfy the equation and use the division algorithm to obtain
$$
n=(m+1)q+r=(2m+1)q'+r',
$$
where $0\leq r\leq m$, $0\leq r'\leq 2m$. If $n$ satisfies the equation, then
$q=q'+1$ if $r'\neq 0$, and $q=q'$ if $r'=0$. Hence we have the following cases:\\
\textbf{(a)} $r'=0$. We have
$$
\frac{n}{m+1}-1<q=q'=\frac{n}{2m+1}\Rightarrow nm<(m+1)(2m+1)\Rightarrow q'<1+\frac{1}{m}\leq 2
$$
thus $q'=1$ ($q'> 0$ since $n> 0$) and so
 $n=2m+1$.\\
\textbf{(b)} $r'\neq 0$. We conclude that
$$
n=(m+1)(q'+1)+r=(2m+1)q'+r'\Rightarrow m(q'-1)=r-r'+1.
$$
Now, if $q'=1$, then $r'=r+1$ and so $$2m+2\leq n=2m+2+r\leq 3m+2.$$
But if $q'\neq 1$, then $m|r'-r-1$
where $-m\leq r'-r-1\leq 2m+1$. Thus, we have only the following cases when $m>1$ (if $m=1$,
then it is true since the set of all solutions of the equation  $\lceil \frac{n}{3}\rceil=\lfloor \frac{n}{2}\rfloor$ is $\{2,3,4,5,7\}$):\\
$r'=r-m+1$. Since $1\leq r-m+1\leq 1$, $r=m$ and substituting it in the above equation we obtain
$q=3$ and so $n=4m+3$.\\
$r'=r+m+1$. We conclude that $q=1$ and so $$m+1\leq n=m+1+r=r'\leq 2m.$$
$r'=r+2m+1$. This requires  $q'=-1$ that is a contradiction.
\end{proof}
Note that if $m=1$, then the solution set of $(3)$ is $\{2,3,4,5,7\}$. Therefore,
$\{0,1\}$ is index stable in $\mathbb{Z}_2$, $\mathbb{Z}_3$, $\mathbb{Z}_4$, $\mathbb{Z}_5$, and $\mathbb{Z}_7$ (also see \cite{Hooshmand2020, Hooshmand2023}).
\begin{remark}
A necessary condition for $n$ to be a solution of the equation $(3)$ is $n\geq m+1$ and so $m\in \mathbb{Z}_n$.
Also, for a fixed $m$ not only the solutions set of $(3)$ but also the solution set of
the inequality $\Big\lceil\frac{n}{2m+1}\Big\rceil\geq \Big\lfloor\frac{n}{m+1}\Big\rfloor$ is finite
with the upper bound $\frac{(2m+1)(2m+2)}{m}$.
In this regard, a values table of
$\Big\lfloor\frac{n}{m+1}\Big\rfloor-\Big\lceil\frac{n}{2m+1}\Big\rceil$ (dependent on the values of $n$ with respect to the intervals
 $1,m+1,3m+2,4m+3,\frac{(2m+1)(2m+2)}{m}$) should be interesting.
\end{remark}

\section{More results for subindices and index stability in $\mathbb{Z}_n$}
The following theorem evaluates the exact values of the minimum and maximum sizes of the subfactors of $\mathbb{Z}_n$ relative
to $\{0,\ldots ,m\}$. Thus, we can measure the least and the most numbers of maximal disjoint translates
of the set  $\{0,\ldots ,m\}$ in $\mathbb{Z}_n$, by using it.
Moreover, it is a generalization of Theorem 4.1 of \cite{HooshmandYousefian}, Corollary 3.18 of \cite{Hooshmand2020}
(also see Question VII of \cite{Hooshmand2023}),
 and provides a basic tool for characterization of finite $k$-index stable groups that mentioned as
an important unsolved problem in the subindex and subfactor theory of finite groups (see Conjecture 4.2 and New Problem of \cite{HooshmandYousefian}).\\
For proving the main theorem  we need to recall the concept of $b$-parts of numbers introduced and studied in \cite{Hooshmand2013}.
Let $b\neq 0$ be a constant integer (even real)
 number. For all real numbers $a$ set $$\lfloor a\rfloor_b=b\Big\lfloor\frac{a}{b}\Big\rfloor \; , \;
 \lceil a\rceil_b=b\Big\lceil\frac{a}{b}\Big\rceil \; , \; (a)_b=a-\lfloor a\rfloor_b.$$
The $b$-parts of numbers have many different properties as aspects: number theoretic, algebraic, and analytic explanations (see \cite{Hooshmand2013},
note that $\lfloor a\rfloor_b$ is denoted by $[a]_b$ in it).
For example, if $a\in \mathbb{Z}$ and $b$ is a positive integer, then $(a)_b$ is the 
remainder of the division of $a$ by $b$. Hence, the binary operation of $\mathbb{Z}_n$ is the same $+_n$ defined by $x+_ny=(x+y)_n$ for
all $x,y\in \mathbb{Z}_n$. Note that the additive inverse of $x\neq 0$ is $-_nx=n-x$ (of course $-_n0=0$), and the difference of two elements is
$x-_ny=(x-y)_n$ for all $x,y\in \mathbb{Z}_n$ (see \cite{Hooshmand2021}).
\begin{theorem}\label{mainthm}
For every $m\in \mathbb{Z}_n$ we have
\begin{equation}
|\mathbb{Z}_n:\{0,\ldots ,m\}|^-=\Big\lceil\frac{n}{2m+1}\Big\rceil \; , \; |\mathbb{Z}_n:\{0,\ldots,m\}|^+=\Big\lfloor\frac{n}{m+1}\Big\rfloor
\end{equation}
\end{theorem}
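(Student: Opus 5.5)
The plan is to translate both conditions in $(1)$ into statements about circular distances in $\mathbb{Z}_n$. Put $A=\{0,\ldots,m\}$. Then $A-A$ consists of all $x\in\mathbb{Z}_n$ with $x\le m$ or $x\ge n-m$, that is, all elements at circular distance at most $m$ from $0$. Consequently:
\begin{itemize}
\item $(A-A)\cap(B-B)=\{0\}$ means that any two distinct elements of $B$ are at circular distance at least $m+1$;
\item $(A-A)+B=\mathbb{Z}_n$ means that every element of $\mathbb{Z}_n$ is at circular distance at most $m$ from some element of $B$.
\end{itemize}
If $n\le 2m+1$, then $A-A=\mathbb{Z}_n$. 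The subfactors are then exactly the singletons, and both formulas in the statement give $1$, since $m+1\le n\le 2m+1$. So the real case is $n\ge 2m+2$. By $(2)$ it suffices to construct two subfactors $B$: one with $|B|=\lfloor n/(m+1)\rfloor$ and one with $|B|=\lceil n/(2m+1)\rceil$.

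Both constructions go through a single device. List the elements of $B$ as $b_1<\cdots<b_k$ in $[0,n-1]$ and consider the cyclic gaps
$$g_i=b_{i+1}-b_i \ (1\le i<k), \qquad g_k=n-b_k+b_1 .$$
These gaps sum to $n$. I will check the following claim.

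\emph{Claim.} If every gap satisfies $m+1\le g_i\le 2m+1$, then $B$ is a subfactor relative to $A$.

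\emph{Separation.} The two arcs between any $b_i$ and $b_j$ are each sums of at least one gap, so both have length at least $m+1$. Hence the circular distance between $b_i$ and $b_j$ is at least $m+1$.

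\emph{Covering.} Every $x\in\mathbb{Z}_n$ lies on some arc $[b_i,b_{i+1}]$ of length at most $2m+1$. It is therefore within distance $m$ of one of the two endpoints.

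This wrap-around bookkeeping with the last gap $g_k$ is the only place where care is needed. In particular one must verify that separation holds in both directions around the circle, and that is why the lower bound $g_i\ge m+1$ is imposed on every gap, including $g_k$.

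\emph{Maximum.} Let $q=\lfloor n/(m+1)\rfloor\ge1$ and take $B=\{0,\,m+1,\,2(m+1),\ldots,(q-1)(m+1)\}$. The first $q-1$ gaps equal $m+1$. The last gap is $n-(q-1)(m+1)$, which lies in $[m+1,2m+1]$ by the definition of $q$. So the Claim applies; alternatively, the separation property together with $|B|=\lfloor n/|A|\rfloor$ gives the result directly via property (f). Combined with the upper bound in $(2)$, this gives $|\mathbb{Z}_n:A|^+=\lfloor n/(m+1)\rfloor$.

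\emph{Minimum.} Let $k=\lceil n/(2m+1)\rceil$, so $k\ge2$ when $n\ge 2m+2$. I want to write $n=g_1+\cdots+g_k$ with every $g_i\in[m+1,2m+1]$. This is possible if and only if
$$k(m+1)\le n\le k(2m+1).$$
The right inequality is the definition of the ceiling. For the left one, use $n\ge (k-1)(2m+1)+1$, which gives
$$n-k(m+1)\ \ge\ m(k-2)\ \ge\ 0 .$$
Choose such gaps, for instance as equal as possible, and set $b_1=0$ and $b_{i+1}=b_i+g_i$. The Claim then gives a subfactor of size $k$. Together with the lower bound in $(2)$, this yields $|\mathbb{Z}_n:A|^-=\lceil n/(2m+1)\rceil$, which completes the proof.
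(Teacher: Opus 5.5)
Your proof is correct, and for the lower index it takes a genuinely different route from the paper.

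Your maximal subfactor $B=\{0,m+1,\ldots,(q-1)(m+1)\}$ is the one the paper uses, and the paper likewise finishes that case with property (f). For the minimal one, the paper writes down an explicit $B'$ whose shape depends on whether $s=(n)_{2m+1}$ is $0$, lies in $[1,m]$, or lies in $[m+1,2m]$. It then checks $(A-A)\cap(B'-B')=\{0\}$ and $(A-A)+B'=\mathbb{Z}_n$ by listing all elements of $B'-B'$ and splitting each $x$ by its quotient and residue modulo $2m+1$. It also treats $2m+2\le n\le 3m+2$ separately via $(2)$.

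You instead recast both conditions as circular-distance conditions and prove a single gap criterion. After that, the lower index reduces to the inequality $k(m+1)\le n\le k(2m+1)$ for $k=\lceil n/(2m+1)\rceil$. The paper's $B'$ is one instance of your construction: its cyclic gaps are $2m+1,\ldots,2m+1,s$ (with $2m+1$ in place of $s$ when $s=0$), respectively $2m+1,\ldots,2m+1,m+1,s+m$. So your argument explains why those case computations succeed and makes them unnecessary.

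Your framework also gives more at no extra cost, because the converse of your Claim holds. For consecutive elements, the gap $g_i$ is at least their circular distance, hence at least $m+1$. A gap $g_i\ge 2m+2$ would leave $b_i+m+1$ at distance at least $m+1$ from every element of $B$. So the subfactors relative to $\{0,\ldots,m\}$ are exactly the sets whose cyclic gaps all lie in $[m+1,2m+1]$. This makes the proof independent of $(2)$, and shows that the sizes of these subfactors fill the whole interval $[\lceil n/(2m+1)\rceil,\lfloor n/(m+1)\rfloor]_{\mathbb{Z}}$.

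What the paper's route offers in exchange is an explicit closed-form minimal subfactor written in $\mathbb{Z}_n$-arithmetic through $b$-parts. It is verified directly in the difference-set formulation $(1)$ that the general theory uses for arbitrary groups.
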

\begin{proof}
We put $G=\mathbb{Z}_n$, $A=\{0,\ldots ,m\}$, and follow the next steps.\\
\textbf{Step 1.}  If $m> \frac{n-2}{2}$
, then $1\leq \frac{n}{m+1}<2$, $0<\frac{n}{2n-1}\leq \frac{n}{2m+1}\leq 1$,  $|A|>\frac{n}{2}$, and
so
$$
|G:A|^-=\Big\lceil\frac{n}{2m+1}\Big\rceil=|G:A|^+=\Big\lfloor\frac{n}{m+1}\Big\rfloor=1.
$$
Also, if  $\frac{n-3}{3}< m\leq \frac{n-2}{2}$
, then
$$
|G:A|^-=\Big\lceil\frac{n}{2m+1}\Big\rceil=|G:A|^+=\Big\lfloor\frac{n}{m+1}\Big\rfloor=2,
$$
by (2).\\
Hence, for the next steps we can assume that $1\leq m\leq \frac{n-3}{3}$ and so $n\geq 3m+3\geq 6$
 (if $m=0$, then the equalities are clearly held and all are equal to $n$).\\
\textbf{Step 2.} We have
$$
A-A=\{0,1,\ldots ,m, n-m,n-m+1,\ldots , n-1\}\neq G\; , \; \mathcal{C}^0(A):=(\mathbb{Z}_n\setminus(A-A))\cup\{0\}=\{0,m+1,\ldots, n-m-1\}
$$
Put
 $B=\left\{0,m+1,2(m+1),\ldots,(\lfloor \frac{n}{m+1}\rfloor - 1)(m+1)\right\}$, $s=(n)_{2m+1}$ (the remainder of the division of $n$ by $2m+1$), and

$$B'=\left\{\begin{array}{cc}\left\{0,2m+1,\ldots,(\lceil\frac{n}{2m+1}\rceil- 1)(2m+1)\right\} ;& s\geq m+1 \mbox{ or } s=0\\
\{0,2m+1,\ldots,(\lceil\frac{n}{2m+1}\rceil- 2)(2m+1),(\lceil\frac{n}{2m+1}\rceil- 1)(2m+1)-m\} ;& 1\leq s\leq m
\end{array}
\right.$$
Note that the general form of the elements of $B'-B'$ except for the last element, is
$$
(\Big\lceil\frac{n}{2m+1}\Big\rceil- k)(2m+1)=\Big\lceil n\Big\rceil_{2m+1}-k(2m+1)\; ; \; 2\leq k\leq \Big\lceil\frac{n}{2m+1}\Big\rceil,
$$
and the last element for the first (resp. second) case is $\lceil n\rceil_{2m+1}-2m-1$ (resp. $\lceil n\rceil_{2m+1}-3m-1$).
A similar general form also can be written for $B$.
We have $|B|=\lfloor \frac{n}{m+1}\rfloor$, $|B'|=\lceil \frac{n}{2m+1}\rceil$, and $B,B'$ are (well-defined and) subsets of
$\mathcal{C}^0(A)$. Because $m\leq \frac{n-2}{2}$ if and only if $$m+1\leq \lfloor n\rfloor_{m+1}-m-1\leq n-m-1.$$ Also,
for $B'$, note that $s\geq m+1$ if and only if $\lfloor n\rfloor_{2m+1}\leq n-m-1$,
and $\lfloor n\rfloor_{2m+1}=\lceil n\rceil_{2m+1}-(2m+1)$ since $2m+1$ does not divide $n$ (equivalently  $(n)_{2m+1}\neq 0$).
If $s=0$, then
$$B'=\left\{0,2m+1,\ldots,n-2m-1\right\},$$
that is clearly a subset of $\mathcal{C}^0(A)$ and also a subgroup of $G$.\\
Now, if $1\leq s\leq m$, then
$$
\frac{n-1}{2m+1}\geq \frac{n-s}{2m+1}=\Big\lceil\frac{n}{2m+1}\Big\rceil-1
$$
and so
$$
\lceil n\rceil_{2m+1}-3m-1\leq (n-1+2m+1)-3m-1=n-m-1.
$$
Also, note that $\Big\lceil\frac{n}{2m+1}\Big\rceil\geq 3$ for if not, then $n=(\Big\lceil\frac{n}{2m+1}\Big\rceil-1)(2m+1)+s\leq 3m+1$
  and so  $\lceil n\rceil_{2m+1}-4m-2\geq 2m+1$ (we need this for the definition of $B'$ in the second case).\\
  \textbf{Step 3.} In this step we prove that $B-B\subseteq \mathcal{C}^0(A)$ and $B'-B'\subseteq \mathcal{C}^0(A)$.
The general form of the elements of $B-B$ is $\kappa (2m+1)$ or $n+\kappa (2m+1)$ where $1-\lfloor\frac{n}{m+1}\rfloor\leq \kappa \leq \lfloor\frac{n}{m+1}\rfloor-1$.
Such elements belong to $\mathcal{C}^0(A)$ if $0\leq \kappa\leq \lceil\frac{n}{m+1}\rceil-1$, obviously. Hence, let
$1-\lfloor\frac{n}{m+1}\rfloor\leq \kappa\leq -1$, then
$$
m+1\leq n-(\Big\lfloor\frac{n}{m+1}\Big\rfloor-1)(m+1)\leq \kappa (m+1)+n\leq n-m+1
$$
Therefore $B-B\subseteq \mathcal{C}^0(A)$.\\
 The general form of elements of $B'-B'$ is
 $\kappa (2m+1)$ or $n+\kappa (2m+1)$ where $2-\lceil\frac{n}{2m+1}\rceil\leq \kappa \leq \lceil\frac{n}{2m+1}\rceil-2$,
 for both cases, together with $\lceil n\rceil_{2m+1}-(2m+1)$ and $n-(\lceil n\rceil_{2m+1}-(2m+1))$ if $s\geq m+1$ or $s=0$ (i.e., the first case), and
 $(\kappa-1) (2m+1)-m$ and $n-((\kappa-1) (2m+1)-m)$, where $2\leq\kappa\leq \lceil\frac{n}{2m+1}\rceil$,  if $1\leq s\leq m$ (i.e., the second case).\\
 Now, let $s\geq m+1$, and $1-\lceil\frac{n}{2m+1}\rceil\leq \kappa \leq -1$, then
 $$
 m+1\leq s=n-\lceil n\rceil_{2m+1}+2m+1\leq n+\kappa (2m+1)\leq n-2m-1\leq n-m-1,
 $$
and so $n+\kappa (2m+1)\in \mathcal{C}^0(A)$ (this also holds for the case that $\kappa$ is non-negative, obviously). Also, if
$s=0$, then $B'-B'=B'\subseteq \mathcal{C}^0(A)$. \\
Now, if $1\leq s\leq m$, then we have the following cases for the elements of $B'-B'$:\\
\textbf{(i)} $\kappa (2m+1)$ where $0\leq \kappa \leq \lceil\frac{n}{2m+1}\rceil-2$. Such elements lie in $\mathcal{C}^0(A)$ clearly.\\
\textbf{(ii)} $n+\kappa (2m+1)$ where $ 2-\lceil\frac{n}{2m+1}\rceil\leq \kappa \leq -1$. We have
$$
2m+1<s+2m+1= n-(\lceil n\rceil_{2m+1}-2(2m+1))\leq n+\kappa (2m+1)\leq n-2m-1\leq n-m+1,
$$
and so such elements lie in $\mathcal{C}^0(A)$.\\
\textbf{(iii)} $(\kappa-1) (2m+1)-m$, where $2\leq\kappa\leq \lceil\frac{n}{2m+1}\rceil$. We conclude that
$$
m+1\leq (\kappa-1) (2m+1)-m\leq \lceil n\rceil_{2m+1}-3m-1\leq n-m-1,
$$
and thus we are done for this case.\\
\textbf{(iv)} $n-((\kappa-1) (2m+1)-m)$, where $2\leq\kappa\leq \lceil\frac{n}{2m+1}\rceil$.
$$
m+1\leq s+m= n-(\lceil n\rceil_{2m+1}-(3m+1))\leq n-((\kappa-1) (2m+1)-m)\leq n-m-1,
$$
and hence this step is complete.\\
\textbf{Step 4.} Here, we prove that $(A-A)+B'=G$. If $s=0$, then
it is confirmed since $B'$ is a subgroup and $A-A$ is a transversal of $B'$ in $G$.
Hence, either $1\leq s\leq m$ or $s\geq m+1$ ($m+1\leq s\leq 2m$). Note that every element of $(A-A)+B'$ is
of the form $\delta+b$, where $\delta\in A-A$ and $b\in B'$. Fix $x\in G$ and put $r=(x)_{2m+1}$, $x'=\lfloor \frac{x}{2m+1}\rfloor$.
Then, we have $x'\leq \lceil\frac{n}{2m+1}\rceil-1$ (because $\left\lfloor \frac{x}{2m+1} \right\rfloor
\le\left\lfloor \frac{n-1}{2m+1} \right\rfloor=\left\lceil \frac{n}{2m+1} \right\rceil - 1$) and the following cases: \\
\textbf{(i)} $r=0$. If $0\leq x'\leq \lceil\frac{n}{2m+1}\rceil-2$, then putting $k=\lceil \frac{n}{2m+1}\rceil-x'$
one may choose $\delta=0$ and $b=\lceil n\rceil_{2m+1}-k(2m+1)\in B'$ to obtain $x=\delta+b$.  But
if $x'=\lceil\frac{n}{2m+1}\rceil-1$, then $x=\lceil n\rceil_{2m+1}-(2m+1)+0\in (A-A)+B'$ if $s\geq m+1$,
and $x=\lceil n\rceil_{2m+1}-(3m+1)+m\in (A-A)+B'$ if $s\leq m$.\\
\textbf{(ii)} $1\leq r\leq m$. If $0\leq x'\leq \lceil\frac{n}{2m+1}\rceil-2$, then it is enough to take $k=\lceil \frac{n}{2m+1}\rceil-x'$
and $\delta=r$. But if $x'=\lceil\frac{n}{2m+1}\rceil-1$, then choose $k=\lceil \frac{n}{2m+1}\rceil-x'$ and $\delta=r$ for the case $s\geq m+1$,
and for the case $1\leq s\leq m$, considering $\lceil\frac{x}{2m+1}\rceil=x'+1=\lceil\frac{n}{2m+1}\rceil$ we have
$$
1\leq n-x=(2m+1)x'+(n)_{2m+1}-((2m+1)x'+(x)_{2m+1})=(n)_{2m+1}-(x)_{2m+1}=s-r\leq m-1,
$$
and so $n-m+1\leq x\leq n-1$ which means $x\in A-A\subseteq (A-A)+B'$.
\\
\textbf{(iii)} $r\geq m+1$. If $0\leq x'\leq \lceil\frac{n}{2m+1}\rceil-3$, then putting $k=\lceil \frac{n}{2m+1}\rceil-x'-1$
and $\delta=r+n-2m-1$ (since $n-m\leq r+n-2m-1\leq n-1$) we obtain
$$\delta+_n b=((2m+1)x'+r+n)_n=(2m+1)x'+r=x.$$
But if $x'=\lceil\frac{n}{2m+1}\rceil-2$, then
choose $k=\lceil \frac{n}{2m+1}\rceil-x'-1$ and $\delta=r$ for the case $s\geq m+1$,
and  $k=1$ and $\delta=r-m-1$ if $1\leq s\leq m$. The last possible case for
this part is $x'=\lceil\frac{n}{2m+1}\rceil-1$, then
$$
x=(2m+1)(\lceil\frac{n}{2m+1}\rceil-1)+r\geq (2m+1)(\frac{n}{2m+1}-1)+r\geq n-m,
$$
thus $x\in A-A\subseteq (A-A)+B'$.
  \\
\textbf{Step 5.} From the steps 3 and 4, and since $|B|=\lfloor \frac{n}{m+1}\rfloor=\lfloor \frac{|G|}{|A|}\rfloor$,
we conclude that $B$ and $B'$ are subfactors of $G$ relative to $A$  (i.e., maximal with respect to this property). Therefore,
$\{0,\ldots ,m\}$ attains the maximum possible upper index and the minimum possible lower index permitted by $(2)$.
\end{proof}

The next corollary is a direct result of the above theorem and Lemma 1.1.

\begin{corollary}\label{cor:stable} For every $m\in \mathbb{Z}_n$:
\begin{enumerate}[(a)]
\item The set $\{0,\ldots ,m\}$ is index stable in $\mathbb{Z}_n$ if and only if either $n\leq 3m+2$ or $n=4m+3$
(equivalently, $m>\lfloor\frac{n}{3}\rfloor-1$ or $m=\frac{n-3}{4}$). Moreover, when this holds the index is
1,2, or 3.
\item $\{0,\ldots ,m\}$ takes its  maximum upper and minimum lower indices. Hence,
it is strong index unstable if and only if $n > 3m+3$ and $n\neq 4m+3$.
\end{enumerate}
\end{corollary}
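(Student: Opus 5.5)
The plan is to read both parts off Theorem~\ref{mainthm} combined with Lemma~1.1, with no new construction. The case $m=0$ is excluded at the outset: there both indices equal $n$ for every $n$, so $\{0\}$ is always stable. The criterion in (a) is therefore meant for $m\geq 1$, which is also the standing hypothesis of Lemma~1.1.

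For (a), let $m\geq 1$. By Theorem~\ref{mainthm},
$$|\mathbb{Z}_n:\{0,\ldots,m\}|^-=\Big\lceil\frac{n}{2m+1}\Big\rceil, \qquad |\mathbb{Z}_n:\{0,\ldots,m\}|^+=\Big\lfloor\frac{n}{m+1}\Big\rfloor.$$
Hence $\{0,\ldots,m\}$ is index stable if and only if $n$ solves equation $(3)$. Lemma~1.1 gives the solution set $[m+1,3m+2]_\mathbb{Z}\cup\{4m+3\}$. The lower bound $n\geq m+1$ holds automatically because $m\in\mathbb{Z}_n$ (Remark~1.2), so the condition is exactly $n\leq 3m+2$ or $n=4m+3$.

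For the reformulation I will check two equivalences.
\begin{itemize}
\item[(i)] $n\leq 3m+2$ is equivalent to $m\geq \lceil\frac{n-2}{3}\rceil$. A three-case check on $n \bmod 3$ shows $\lceil\frac{n-2}{3}\rceil=\lfloor\frac{n}{3}\rfloor$, so this reads $m>\lfloor\frac n3\rfloor-1$.
\item[(ii)] $n=4m+3$ is the same as $m=\frac{n-3}{4}$.
\end{itemize}
The claim that the common index is $1$, $2$ or $3$ is the second assertion of Lemma~1.1. More precisely, from its proof the index is $1$ for $m+1\le n\le 2m+1$, $2$ for $2m+2\le n\le 3m+2$, and $3$ for $n=4m+3$.

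For (b), the first sentence is a restatement of Step~5 of the proof of Theorem~\ref{mainthm}. That theorem shows the lower index equals $\lceil\frac{n}{|A-A|}\rceil$ and the upper index equals $\lfloor\frac{n}{|A|}\rfloor$, since $|A-A|=2m+1$ and $|A|=m+1$. So both extreme values permitted by $(2)$ are attained.

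For the ``strong index unstable'' statement I would unwind the definition from \cite{Hooshmand2023, HooshmandYousefian}. I take it to mean instability with the subindices at the two extremes of $(2)$, together with whatever additional separation condition that definition imposes. The extremes are already attained, so the characterization reduces to the instability criterion from (a): unstable exactly when $n\geq 3m+3$ and $n\neq 4m+3$.

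The delicate point, which I expect to be the main obstacle, is the boundary value $n=3m+3$. There the lower index is $\lceil\frac{3m+3}{2m+1}\rceil=2$ and the upper index is $3$. This $n$ is unstable, yet the statement's ``$n>3m+3$'' excludes it. So I would check the exact definition of strong instability against this case and $m=1$, $n=6$, and whether it carries an extra condition (for instance on the gap between the indices, or something excluding $n=3m+3$). Depending on the result, the inequality is either confirmed as stated or must read $n\geq 3m+3$.
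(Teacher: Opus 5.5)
Your proposal is correct and is exactly the paper's argument: the paper simply declares the corollary a direct consequence of Theorem~\ref{mainthm} and Lemma~1.1. Your restriction to $m\geq 1$ is genuinely needed in both parts, since $\{0\}$ is always stable with index $n$.

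Your doubt about $n=3m+3$ is justified, and it resolves in favour of $n\geq 3m+3$. The paper's own definition of strong index unstability, given right after the corollary, is simply $\lceil\frac{n}{|A-A|}\rceil=|\mathbb{Z}_n:A|^-<|\mathbb{Z}_n:A|^+=\lfloor\frac{n}{|A|}\rfloor$, with no extra condition. The paper itself cites $\{0,\ldots,m\}\subseteq\mathbb{Z}_{3m+3}$ (e.g.\ $\{0,1\}\subseteq\mathbb{Z}_6$) as a strong index unstable example. The later proof confirming Conjecture~4.2 also uses the corollary in the form $4m+3\neq n>3m+2$. So ``$n>3m+3$'' in (b) is an off-by-one slip and should read $n\geq 3m+3$.
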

 Note that (due to $(2)$) a subset $A$ of $\mathbb{Z}_n$ is
 strong index unstable
if
$$\lceil\frac{n}{|A-A|}\rceil=|\mathbb{Z}_n:A|^-<|\mathbb{Z}_n:A|^+= \lfloor \frac{n}{|A|}\rfloor.$$
For example, as a strong index unstable example, consider $A=\{0,1,\dots,m\}$ in $\mathbb{Z}_{3m+3}$ which gives $|\mathbb{Z}_n:A|^-=2$, $|\mathbb{Z}_n:A|^+=3$ (e.g. $m=2$, $n=9$). \\

\textbf{Problem 1.} Characterize or classify all subsets of $\mathbb{Z}_n$ that are strong index unstable.
\\
One may also consider the absolute strong unstability in $\mathbb{Z}_n$, that is $2 = |\mathbb{Z}_n:A|^- < |\mathbb{Z}_n:A|^+ = \lfloor n/2 \rfloor$.
For example:  $A=\{0,1\}$ in $\mathbb{Z}_6$ gives $|\mathbb{Z}_6:A|^-=2$, $|\mathbb{Z}_6:A|^+=\lfloor6/2\rfloor=3$.
Indeed, Lemma 3.1 in the next section shows that only $\mathbb{Z}_6$ contains such subsets.\\

The above theorem gives another lower bound for $|\mathbb{Z}_n:A|^+$ for every subset $A$.
\begin{corollary}
For every \(A\subseteq \mathbb{Z}_n\) we have
$$
|\mathbb{Z}_n:A|^+\ge \Big\lfloor\frac{n}{\max A+1}\Big\rfloor.
$$
%\textcolor{green}{what about $|\mathbb{Z}_n:A|^-$?}
\end{corollary}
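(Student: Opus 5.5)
Set $m=\max A$, where $A\subseteq\mathbb{Z}_n$ is viewed as a set of integers in $\{0,\ldots,n-1\}$. Then $m\in\mathbb{Z}_n$ and $A\subseteq\{0,\ldots,m\}=:A'$. The plan is to combine the monotonicity property (a) of the upper index with Theorem~\ref{mainthm}.

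\textbf{Step 1.} Property (a) of the introduction, applied to $A\subseteq A'$, gives $|\mathbb{Z}_n:A|^+\ge|\mathbb{Z}_n:A'|^+$. This is the expected behaviour: a packing of disjoint translates of the larger set $A'$ restricts to a packing of disjoint translates of $A$. Equivalently, by (d), the upper index equals $\mathrm{pack}_{\mathbb{Z}_n}$, and packing numbers are monotone decreasing in the set.

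\textbf{Step 2.} Theorem~\ref{mainthm} gives $|\mathbb{Z}_n:A'|^+=\lfloor n/(m+1)\rfloor$. Substituting $m=\max A$ into Step 1 yields the claimed inequality.

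There is essentially no obstacle. The only point needing care is that the upper bound on the subfactor size coming from (a) is really monotone: if $B$ satisfies $(A'-A')\cap(B-B)=\{0\}$, then $B$ also satisfies $(A-A)\cap(B-B)=\{0\}$. Such a $B$ extends to a maximal $B_1\supseteq B$ for $A$, and this $B_1$ is a subfactor relative to $A$ by the equivalent maximality formulation of $(1)$. Hence $|\mathbb{Z}_n:A|^+\ge|B|$. Taking $B$ to be the explicit set $\{0,m+1,\ldots,(\lfloor n/(m+1)\rfloor-1)(m+1)\}$ from the proof of Theorem~\ref{mainthm} makes the argument fully self-contained. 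The degenerate case $m=0$ (so $A=\{0\}$) gives $n$ on both sides.
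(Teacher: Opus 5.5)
Your proof is correct and is the same as the paper's: the paper also just uses $A\subseteq\{0,\ldots,\max A\}$, the monotonicity property (a), and the value $|\mathbb{Z}_n:\{0,\ldots,m\}|^+=\lfloor n/(m+1)\rfloor$ from Theorem~\ref{mainthm}. Your self-contained variant is also valid: you extend the explicit set $B=\{0,m+1,\ldots,(\lfloor n/(m+1)\rfloor-1)(m+1)\}$ to a maximal disjoint family of translates of $A$, which shows that only the easy half of the theorem (the construction of $B$) is needed.
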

\begin{proof}
Since $A\subseteq \{0,\cdots, \max A\}$, the above theorem together with the property (a) (Section 1) give the result.
\end{proof}
The following example shows that one cannot say which of the lower bounds $\Big\lfloor\frac{n}{\max A+1}\Big\rfloor$ and $\Big\lceil\frac{n}{|A-A|}\Big\rceil$,
for $|\mathbb{Z}_n:A|^+$, is better in general.
\begin{example}
Let $n>8$, and consider $A=\{1,2,5\}$ as a subset of  $\mathbb{Z}_n$. Then $\max A+1=6$ and $|A-A|=7$.
Now, we have:
\begin{itemize}
\item if $n=42$, then $\Big\lfloor\frac{42}{\max A+1}\Big\rfloor=7>\Big\lceil\frac{42}{|A-A|}\Big\rceil=6$.
\item if $n=35$, then $\Big\lfloor\frac{35}{\max A+1}\Big\rfloor=5=\Big\lceil\frac{35}{|A-A|}\Big\rceil$.
\item if $n=22$, then $\Big\lfloor\frac{22}{\max A+1}\Big\rfloor=3<\Big\lceil\frac{22}{|A-A|}\Big\rceil=4$.
\end{itemize}
\end{example}
There is an important basic conjecture in the theory of subindices of finite groups as follows:\\
\cite[Conjecture 4.2]{HooshmandYousefian}\textbf{.} If $n > 11$, then $\mathbb{Z}_n$ is not $k$-index stable if and only if $2\leq k \leq \lfloor \frac{n}{3} \rfloor$.\\
The group $\mathbb{Z}_n$ is $k$-index stable if and only if all subsets of the size $k$ are index stable.
For example, it is 1-index stable, and it is $k$-index stable for all  $\frac{n}{3}<k\leq n$.
The above conjecture  is an essential tool for
characterization or classification of finite $k$-index stable groups. Now we can prove it by using the main
theorem.
\begin{theorem}
If $n>11$, then $\mathbb{Z}_n$ is not $k$-index stable if and only if $2\leq k \leq \lfloor \frac{n}{3} \rfloor$.
\end{theorem}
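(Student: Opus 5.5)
The plan is to prove the two directions separately. For the easy direction I will show that if $k=1$ or $k>\lfloor n/3\rfloor$ then every $k$-subset is index stable. For the main direction I will show that for each $2\le k\le\lfloor n/3\rfloor$ there is an explicit $k$-subset $A\subseteq\mathbb{Z}_n$ with $|\mathbb{Z}_n:A|^-<|\mathbb{Z}_n:A|^+$.

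\emph{Easy direction.} If $k=1$ then $A=\{a\}$, so $A-A=\{0\}$ and both bounds in (2) equal $n$; hence $A$ is index stable. If $k>\lfloor n/3\rfloor$ then $k>n/3$, and property (e) of Section 1 gives index stability directly.

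\emph{Main direction, generic case.} Fix $2\le k\le\lfloor n/3\rfloor$, so $n\ge 3k$, and take $A=\{0,\ldots,k-1\}$, i.e.\ $m=k-1\ge 1$. By Corollary~\ref{cor:stable}(a), $A$ is index unstable exactly when $n>3m+2=3k-1$ and $n\ne 4m+3=4k-1$. The first condition holds because $n\ge 3k$. So this choice settles every case except $n=4k-1$.

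\emph{Main direction, exceptional case $n=4k-1$.} This is the only real obstacle, and a different $k$-subset is needed. Since $n>11$, $n=4k-1$ forces $k\ge 4$. I would take the gapped set
$$A=\{0,1,\ldots,k-2,k\},$$
which has $k$ elements and $\max A=k$.
\begin{itemize}
\item[(1)] \emph{Compute $A-A$.} The differences of $\{0,\ldots,k-2\}$ give $\pm\{0,\ldots,k-2\}$. The element $k$ contributes $k-(k-2)=2$ up through $k-0=k$, and in particular $k-1=k-1-0$ cannot arise that way, but $k-1$ does arise as $k-1$ (from $k$ and $1$). Hence $A-A=\{0,\pm1,\ldots,\pm k\}$, an interval of size $2k+1$.
\item[(2)] \emph{Upper index.} By the Corollary following Corollary~\ref{cor:stable},
$$|\mathbb{Z}_n:A|^+\ge\Big\lfloor\frac{4k-1}{k+1}\Big\rfloor=3,$$
which holds because $4k-1\ge 3k+3$ when $k\ge4$.
\item[(3)] \emph{Lower index.} Take $B=\{0,2k\}$. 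Since $k+1\le 2k\le n-k-1=3k-2$, we have $2k\notin A-A$. Also $-2k\equiv 2k-1\notin A-A$. Therefore $(A-A)\cap(B-B)=\{0\}$.
\item[(4)] \emph{Covering.} $(A-A)+B$ is the union of the intervals $[-k,k]$ and $[k,3k]$, which together cover $[-k,3k]$. Since $n=4k-1$, this is all $4k$ consecutive residues, i.e.\ all of $\mathbb{Z}_n$. So $B$ satisfies (1), and $|\mathbb{Z}_n:A|^-\le 2$.
\end{itemize}
Together, (2) and (3)--(4) give $|\mathbb{Z}_n:A|^-\le2<3\le|\mathbb{Z}_n:A|^+$, so $A$ is index unstable and $\mathbb{Z}_n$ is not $k$-index stable.

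The step needing the most care is the exceptional case $n=4k-1$: checking that $A-A$ is exactly the interval $[-k,k]$ and that $k\ge4$, which is where the hypothesis $n>11$ enters. For $n=11$, $k=3$ the argument fails, consistent with the threshold in the statement. Everything else follows directly from Corollary~\ref{cor:stable} and property (e).
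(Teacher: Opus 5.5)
Your proof is correct and follows the paper's argument. You use the same split into the generic case $\{0,\ldots,k-1\}$ via Corollary~\ref{cor:stable} and the exceptional case $n=4k-1$ with the same set $\{0,\ldots,k-2,k\}$. The differences are minor: you obtain $|\mathbb{Z}_n:A|^+\ge 3$ from the $\max A$ corollary instead of exhibiting $B_1=\{0,k+1,2k+2\}$ (which is exactly the subfactor that corollary implicitly supplies), and you use the subfactor $\{0,2k\}$ in place of $\{0,2k-2\}$. One small miscount, which does no harm: $[-k,3k]$ contains $4k+1$ consecutive integers rather than $4k$.
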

\begin{proof}
 If $\mathbb{Z}_n$ is not $k$-index stable, then $2\leq k \leq \lfloor \frac{n}{3} \rfloor$ (as mentioned before).
 Conversely, if $2\leq k \leq \lfloor \frac{n}{3} \rfloor$, then put $m=k-1$.
Now, consider two cases:\\
\textbf{Case 1:} $n\neq 4m+3$. Since $4m+3\neq n>3m+2$, Theorem~\ref{mainthm} and Corollary~\ref{cor:stable} imply $\{0,\ldots ,m\}$ of size $k$ is not index stable.\\
\textbf{Case 2:} $n=4m+3$. In this case $\mathbb{Z}_n=\mathbb{Z}_{4k-1}$ and $k\geq 4$, $n\geq 15$. We claim that  the set
$\{0,\ldots ,m-1,m+1\}=\{0,\ldots ,k-2,k\}$ with the size $k$ is not index stable. Indeed, we have
$$
A-A=\{0,\ldots ,k,3k-1,3k,\ldots,4k-2\}\; , \; \mathcal{C}^0(A)=\{0,k+1,k+2,\ldots,3k-2\}.
$$
Thus $|A-A|=2k$ and
$$
2=\Big\lceil \frac{4k-1}{2k}\Big\rceil\leq |B|\leq \Big\lfloor \frac{4k-1}{k}\Big\rfloor=3,
$$
for all subfactors $B$ of $\mathbb{Z}_n$ relative to $A$. On the other hand $B_1=\{0,k+1,2k+2\}$ and $B_2=\{0,2k-2\}$ are subfactors of $\mathbb{Z}_n$ relative to $A$.
Because
\[
B_1-B_1=\{0,k+1,2k+2,2k-3,3k-2\},
\]
and
\[
B_2-B_2=\{0,2k-2,2k+1\}.
\]

Since
\[
(A-A)\cap(B_1-B_1)=\{0\},
\;
(A-A)\cap(B_2-B_2)=\{0\},
\]
\(A+B_1\) and \(A+B_2\) are direct.

Moreover,
\[
(A-A)+B_2
=
\{0,\ldots,2k\}
\cup
\{2k-2,\ldots,3k-2\}
\cup
\{3k-1,\ldots,4k-2\}
=
\mathbb Z_{4k-1}.
\]
Hence \(B_2\) is a subfactor of \(\mathbb Z_{4k-1}\) relative to \(A\).
Now, due to the above inequality, and since \(|B_2|=2\) and \(|B_1|=3\), we obtain
\[
2\le |\mathbb Z_{4k-1}:A|^-\le 2<3\le |\mathbb Z_{4k-1}:A|^+\le 3.
\]
Therefore
\[
|\mathbb Z_{4k-1}:A|^-=2,
\qquad
|\mathbb Z_{4k-1}:A|^+=3.
\]
Thus \(A\) is index unstable.
\end{proof}
About the above theorem, indeed, if $6\leq n\neq 7, 11$, then $\mathbb{Z}_n$ is not $k$-index stable if and only if $2\leq k \leq \lfloor \frac{n}{3} \rfloor$.
One can check it for the case $n\leq 11$ by Table 1 of \cite{HooshmandYousefian}.\\
As another result of the above theorem, we have  $\mathbb{Z}_{4k-1}$ is not $k$-index stable if $k\geq 4$.
\begin{corollary}
If $k\geq 4$, then $|\mathbb{Z}_{4k-1}:\{0,\ldots ,k-2,k\}|^-=2$, $|\mathbb{Z}_{4k-1}:\{0,\ldots ,k-2,k\}|^+=3$
and so $\mathbb{Z}_{4k-1}$ is not $k$-index stable.
\end{corollary}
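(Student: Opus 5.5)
The corollary is essentially Case 2 of the proof of the preceding theorem, now stated on its own. I would reprove it directly with $n=4k-1$ and $A=\{0,\ldots,k-2,k\}$, which has $|A|=k$. The plan has three steps.

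\textbf{Step 1: the bounds from (2).} First compute $A-A$. The positive differences are $1,\ldots,k$, since the value $k-1$ occurs as $k-1-0$ or $k-(1)$. Negating these gives $n-k,\ldots,n-1$, that is, $3k-1,\ldots,4k-2$. Hence
\[
A-A=\{0,\ldots,k\}\cup\{3k-1,\ldots,4k-2\},\qquad |A-A|=2k+1 .
\]
Note that this is $2k+1$, not $2k$ as written in the theorem's proof. It does not matter: we still get $\lceil (4k-1)/(2k+1)\rceil = 2$ for $k\ge 2$, and $\lfloor (4k-1)/k\rfloor = 3$. So by (2) every subfactor $B$ satisfies $2\le |B|\le 3$.

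\textbf{Step 2: a subfactor of size $3$.} Take $B_1=\{0,k+1,2k+2\}$. Its nonzero differences are
\[
k+1,\quad 2k+2,\quad n-(k+1)=3k-2,\quad n-(2k+2)=2k-3 .
\]
All of these lie in $\mathcal{C}^0(A)=\{0\}\cup[k+1,3k-2]$. For $2k-3$ this needs $2k-3\ge k+1$, i.e. $k\ge 4$, and for $2k+2$ it needs $2k+2\le 3k-2$, again $k\ge 4$. So $(A-A)\cap(B_1-B_1)=\{0\}$. Since $|B_1|=3=\lfloor n/|A|\rfloor$, property (f) shows $B_1$ is a subfactor, and therefore $|\mathbb{Z}_{4k-1}:A|^+=3$.

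\textbf{Step 3: a subfactor of size $2$.} This is the main obstacle: a $2$-set must be maximal, so we need $(A-A)+B_2$ to be all of $\mathbb{Z}_n$ while keeping the differences outside $A-A$. Take $B_2=\{0,2k-2\}$. Its nonzero differences are $2k-2$ and $n-(2k-2)=2k+1$, both in $[k+1,3k-2]$ when $k\ge 3$. For the covering condition:
\begin{itemize}
\item $(A-A)+0$ gives $[0,k]\cup[3k-1,4k-2]$;
\item $\{0,\ldots,k\}+(2k-2)$ gives $[2k-2,3k-2]$;
\item $\{3k-1,\ldots,4k-2\}+(2k-2)$ reduces mod $4k-1$ to $[k-2,2k-3]$.
\end{itemize}
The union is $[0,4k-2]$: the last interval starts at $k-2$, which is at most $k+1$, and ends at $2k-3$, right before $2k-2$. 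So $(A-A)+B_2=\mathbb{Z}_n$, $B_2$ is a subfactor, and $|\mathbb{Z}_{4k-1}:A|^-=2$.

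Since $2\ne 3$, $A$ is a $k$-subset that is not index stable, and hence $\mathbb{Z}_{4k-1}$ is not $k$-index stable.
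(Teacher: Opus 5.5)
Your proof is correct and follows the paper's own argument (Case 2 in the proof of the preceding theorem), using the same $A$, $B_1=\{0,k+1,2k+2\}$ and $B_2=\{0,2k-2\}$. Your value $|A-A|=2k+1$ and your wrapped interval $[k-2,2k-3]$ in the covering step are actually more accurate than the paper's $|A-A|=2k$ and $\{0,\ldots,2k\}$, though neither discrepancy affects the conclusion; one small slip is that $k-1\notin A$, so the difference $k-1$ arises as $k-(1)$ with $1\in A$ (valid since $k\ge 4$), not as $(k-1)-0$.
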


\subsection{The density of index stable and index unstable $2$-subsets of $\mathbb{Z}_n$}

For studying the index stable $2$-subsets of $\mathbb{Z}_n$, first note that
for $A=\{a,b\}\subseteq \mathbb{Z}_n$, we have
\[
|\mathbb{Z}_n:\{a,b\}|^\pm
=
|\mathbb{Z}_n:\{0,(a-b)_n\}|^\pm=|\mathbb{Z}_n:\{0,(b-a)_n\}|^\pm .
\]
Moreover, if $\gcd(a-b,n)=1$, then $(a-b)_n$ is invertible in the ring $(\mathbb{Z}_n,+_n,\cdot_n)$ (where the multiplication is defined by $x\cdot_n y:=(xy)_n=xy-\lfloor xy\rfloor_n$), and multiplying $\{0,a-b\}$ by $(a-b)^{-1}$ (see Remark 3.17 of \cite{Hooshmand2026arXiv}), we arrive at
\[
|\mathbb{Z}_n:\{0,a-b\}|^\pm
=
|\mathbb{Z}_n:\{0,1\}|^\pm .
\]
The converse of this implication is false, for example, in $\mathbb{Z}_{12}$ one can check that $|\mathbb{Z}_{12}:\{0,2\}|^\pm = |\mathbb{Z}_{12}:\{0,1\}|^\pm$ while $\gcd(2,12)=2>1$. Thus the condition $\gcd(a-b,n)=1$ is sufficient for equality of subindices but not necessary (every $2$-element subset whose difference is a unithas the same subindices as $\{0,1\}$).

Hence, for the general case, we do it in several steps as follows:

\textbf{Step 1.} We first claim that the set of all 2-element subsets
of $\mathbb{Z}_n$, denoted by $P_2(\mathbb{Z}_n)$, is equal to

$$
P_2(\mathbb{Z}_n)=\begin{cases} \Big\{x+\{0,d\}  \text{: } 1\leq d\leq \frac{n-1}{2}\; ,\; 0\leq x\leq n-1\Big\} & \text{ if } n \text{ is odd } \\
\Big\{x+\{0,d\}  \text{: } 1\leq d\leq \frac{n}{2}-1\; ,\; 0\leq x\leq n-1\Big\}\dot{\cup} \Big\{x+\{0,\frac{n}{2}\}  \text{: }  0\leq x\leq \frac{n}{2}-1\Big\} & \text{ if } n \text{ is even }
\end{cases}
$$
where $\dot{\cup}$ denotes the disjoint union, and every $2$-element subset admits a unique representation of the stated form.
\begin{proof}
Let $\{a,b\} \in P_2(\mathbb{Z}_n)$ with $a \neq b$. Setting $x = b$ and $d \equiv a - b \pmod{n}$, we have
\[
\{a,b\} = b + \{0, a-b\} = x + \{0,d\},
\]
where $d \in \{1,\ldots,n-1\}$. Since $\{0,d\} = \{0,n-d\}$ as subsets of $\mathbb{Z}_n$, the pairs $d$ and $n-d$ yield the same subset (with the role of $x$ shifting accordingly), so we may normalize to a canonical representative.

\medskip
\noindent Case 1: $n$ is odd.
For any $d \in \{1,\ldots,n-1\}$ we have $d \neq n-d$ (since $n$ is odd, $2d \not\equiv 0$), so $\{0,d\}=\{0,n-d\}$ gives a genuine identification. We choose the representative with $1 \leq d \leq \frac{n-1}{2}$; then $n-d$ satisfies $\frac{n+1}{2} \leq n-d \leq n-1$, so exactly one of $\{d, n-d\}$ lies in $\bigl\{1,\ldots,\frac{n-1}{2}\bigr\}$. The corresponding shift is $x = b \in \{0,\ldots,n-1\}$.

\emph{Uniqueness.} Suppose $x+\{0,d\} = x'+\{0,d'\}$ with $1\leq d,d'\leq \frac{n-1}{2}$ and $0\leq x,x'\leq n-1$. Then $\{x, x{+}d\} = \{x', x'{+}d'\}$ in $\mathbb{Z}_n$. Either $x=x'$ and $x+d=x'+d'$, giving $d=d'$ and $x=x'$; or $x = x'+d'$ and $x+d = x'$, giving $d \equiv -d' \pmod{n}$, i.e.\ $d+d'\equiv 0\pmod n$. But $2\leq d+d'\leq n-1$, a contradiction. Hence the representation is unique.

\medskip
\noindent Case 2: $n$ is even.
Here $d = \frac{n}{2}$ satisfies $d \equiv n-d \pmod{n}$, i.e.\ $\{0,d\}=\{0,n-d\}$ with $d = n-d$. For this special value the two elements of $x+\{0,\frac{n}{2}\}$ are $x$ and $x+\frac{n}{2}$, and swapping them replaces $x$ by $x+\frac{n}{2}$. Thus each such subset has exactly two representations $x+\{0,\frac{n}{2}\}$ and $(x+\frac{n}{2})+\{0,\frac{n}{2}\}$, so we normalize by requiring $0\leq x \leq \frac{n}{2}-1$.

For $d \in \{1,\ldots,n-1\}\setminus\{\frac{n}{2}\}$ we have $d \neq n-d$, and we choose $1\leq d\leq \frac{n}{2}-1$ with $x = b \in \{0,\ldots,n-1\}$ as before.

\emph{Uniqueness.} The two families are disjoint: subsets in the second family satisfy $\{x, x+\frac{n}{2}\} = \{x+\frac{n}{2}, x+n\} = \{x+\frac{n}{2},x\}$, i.e.\ their two elements differ by exactly $\frac{n}{2}$, whereas in the first family the difference $d$ satisfies $1\leq d\leq \frac{n}{2}-1$, so they are distinct. Uniqueness within each family follows by the same argument as in Case~1 (for the first family), and by the normalisation $0\leq x\leq \frac{n}{2}-1$ (for the second family).
\end{proof}
\textbf{Step 2.} (General formulas for subindices of 2-subsets of $\mathbb{Z}_n$).
Putting $d=(a-b)_n$ and $g=\gcd(n,d)$  we have
\begin{equation}
|\mathbb{Z}_n : \{a, b\} |^+ = g \Big\lfloor \frac{n}{2g} \Big\rfloor= \Big\lfloor \frac{n}{2} \Big\rfloor_{g}, \;
|\mathbb{Z}_n : \{a, b\} |^- = g\Big\lceil \frac{n}{3g} \Big\rceil= \Big\lceil \frac{n}{3} \Big\rceil_{g}.
\end{equation}
\begin{proof}
Let \(m = n/g\), and \(H = \langle d \rangle \leq \mathbb{Z}_n\). Since \(d\) has order \(m\) in \(\mathbb{Z}_n\), we have \(H \cong \mathbb{Z}_m\) and \(|\mathbb{Z}_n : H| = g\).
Now the map \(\varphi \colon H \to \mathbb{Z}_m\) defined by \(\varphi(kd) = k\) is a group isomorphism sending \(\{0,d\} \mapsto \{0,1\}\). Since index stability is preserved under isomorphism \cite[Remark 3.16]{Hooshmand2020}, and since \(\{0,d\} \subseteq H \leq \mathbb{Z}_n\), by \cite[Theorem~1.11]{Hooshmand2026arXiv},
\[
    |\mathbb{Z}_n : \{0,d\}|^+ = g \cdot |H : \{0,d\}|^+=g|\mathbb{Z}_m : \{0,1\}|^+=g \Big\lfloor \frac{m}{2} \Big\rfloor= g \Big\lfloor \frac{n}{2g} \Big\rfloor
    =\Big\lfloor \frac{n}{2} \Big\rfloor_{g}.
\]
Analogously, for the lower index.\\
Therefore,  by translation and inverse invariance,
\[
|\mathbb{Z}_n:\{a,b\}|^\pm=|\mathbb{Z}_n:b+\{0,(b-a)_n\}|^\pm
=
|\mathbb{Z}_n:\{0,(a-b)_n\}|^\pm =|\mathbb{Z}_n:\{0,d\}|^\pm  .
\]
\end{proof}
\begin{corollary}[General formulas for subindices of 2-subsets of $\mathbb{Z}_n$]
For every pair of distinct elements $a,b$ of $\mathbb{Z}_n$, we have
\begin{equation}
|\mathbb{Z}_n : \{a, b\} |^+ = \gcd(n,a-b) \Big\lfloor \frac{n}{2\gcd(n,a-b)} \Big\rfloor, \;
|\mathbb{Z}_n : \{a, b\} |^- = \gcd(n,a-b) \Big\lceil \frac{n}{3\gcd(n,a-b)} \Big\rceil.
\end{equation}
\end{corollary}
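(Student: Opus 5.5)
This corollary follows from Step~2 once the notation is unwound, so the plan is to reduce to that computation and check that nothing is lost in the translation.

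Fix distinct $a,b\in\mathbb{Z}_n$ and put $d=(a-b)_n\in\{1,\ldots,n-1\}$. Since $d\equiv a-b \pmod n$, we have $\gcd(n,d)=\gcd(n,a-b)=:g$; the gcd depends only on the residue class, and it is symmetric in $a,b$. By translation invariance of subfactors ($(A-A)$ is unchanged when $A$ is replaced by $x+A$) and inverse invariance, the subindices of $\{a,b\}$ equal those of $\{0,d\}$. The problem is therefore to compute $|\mathbb{Z}_n:\{0,d\}|^\pm$.

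\textbf{Reduction to a subgroup.} Let $H=\langle d\rangle$. Then $|H|=n/g$ and $|\mathbb{Z}_n:H|=g$. The isomorphism $H\cong\mathbb{Z}_{n/g}$, $kd\mapsto k$, sends $\{0,d\}$ to $\{0,1\}$. I would invoke the transfer result \cite[Theorem~1.11]{Hooshmand2026arXiv}, which says that for $A\subseteq H\le G$ the subindices multiply by $|G:H|$. Concretely, a subfactor relative to $A$ in $G$ can be assembled coset by coset from subfactors in $H$, since $A-A\subseteq H$ means that the condition $(A-A)\cap(B-B)=\{0\}$ and the covering condition both decouple over cosets of $H$. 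This gives $|\mathbb{Z}_n:\{0,d\}|^\pm=g\,|\mathbb{Z}_{n/g}:\{0,1\}|^\pm$.

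\textbf{Base case $\{0,1\}$.} Apply Theorem~\ref{mainthm} with $m=1$ in $\mathbb{Z}_{N}$, $N=n/g$. This gives $|\mathbb{Z}_N:\{0,1\}|^+=\lfloor N/2\rfloor$ and $|\mathbb{Z}_N:\{0,1\}|^-=\lceil N/3\rceil$. Theorem~\ref{mainthm} requires $m\in\mathbb{Z}_N$, i.e.\ $N\ge2$, and this holds because $d\neq0$ means $g<n$. Substituting $N=n/g$ yields both formulas of the corollary.

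The only delicate point is the coset-decomposition step. In particular, the lower index must also scale exactly by $g$: a minimal subfactor in $G$ must restrict to a subfactor in each coset $x+H$ after translating back. I would verify this directly. If $B$ is a subfactor in $G$, then $(A-A)+B=G$ with $A-A\subseteq H$ forces $(A-A)+(B\cap(x+H))=x+H$ for every coset. Hence $(B\cap(x+H))-x$ is a subfactor of $H$, so $|B|\ge g\cdot|H:A|^-$. Conversely, unions of translated subfactors from each coset are subfactors of $G$, which gives the matching upper bound. The upper index is handled the same way.
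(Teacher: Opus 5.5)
Your proposal is correct and follows essentially the paper's own proof. As in the paper, you reduce $\{a,b\}$ to $\{0,d\}$ by translation, pass to $H=\langle d\rangle\cong\mathbb{Z}_{n/g}$ via the index-multiplication result \cite[Theorem~1.11]{Hooshmand2026arXiv}, and read off $\lfloor N/2\rfloor$ and $\lceil N/3\rceil$ from Theorem~\ref{mainthm} with $m=1$. Your coset-by-coset check, which uses $A-A\subseteq H$ to decouple both the disjointness and covering conditions, is a valid self-contained proof of the cited transfer step. Your verification that $N=n/g\ge 2$ is a detail the paper leaves implicit.
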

Note that this is obtained from $(5)$ since $\gcd(n,(a-b)_n)=\gcd(n,a-b)$.\\

\textbf{Step 3.} A 2-element set of the form $\{0,d\}$ is index stable if and only if $g \Big\lfloor \frac{n}{2g}\Big\rfloor = g\Big\lceil \frac{n}{3g} \Big\rceil$,
and if and only if $n/g\in  \{2,3,4,5,7\}$ (by Lemma 1.1). This means $\gcd(n,d)=g = \frac{n}{m}$ where $m\mid n$ and $m\in  \{2,3,4,5,7\}$.
Hence, we arrive at the following important result.
\begin{corollary}[Complete characterization of 2-stable subsets of \(\mathbb{Z}_n\)]
A \(2\)-subset \(\{a,b\}\) of \(\mathbb{Z}_n\) is index stable if and only if
\[
\frac{n}{\gcd(n,a-b)} \in \{2,3,4,5,7\}.
\]
\end{corollary}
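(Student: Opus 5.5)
The plan is to reduce everything to the closed formulas of the preceding corollary, which computes the subindices of every $2$-subset, and then to read off the stable cases from Lemma~1.1. Fix distinct $a,b\in\mathbb{Z}_n$ and put $g=\gcd(n,a-b)$ and $q=n/g$. Since $a\neq b$ in $\mathbb{Z}_n$, $g$ is a proper divisor of $n$, so $q\geq 2$ is an integer. By that corollary,
$$
|\mathbb{Z}_n:\{a,b\}|^+=g\Big\lfloor\frac{q}{2}\Big\rfloor,\qquad |\mathbb{Z}_n:\{a,b\}|^-=g\Big\lceil\frac{q}{3}\Big\rceil .
$$
Hence $\{a,b\}$ is index stable exactly when $g\lfloor q/2\rfloor=g\lceil q/3\rceil$. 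Because $g>0$, this is equivalent to $\lceil q/3\rceil=\lfloor q/2\rfloor$.

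Next I apply Lemma~1.1 with $m=1$. Its solution set in $q$ is $[2,5]_\mathbb{Z}\cup\{7\}=\{2,3,4,5,7\}$, as the paper already records after the lemma. So the condition holds if and only if $q=n/\gcd(n,a-b)\in\{2,3,4,5,7\}$, which gives both directions of the characterization at once. As a sanity check on the $m=1$ case, I would verify it by hand. The difference $\lfloor q/2\rfloor-\lceil q/3\rceil$ is $0$ for $q=2,3,4,5,7$ and $1$ for $q=6$. For $q\geq 8$ it is at least $(q-1)/2-(q+2)/3=(q-7)/6>0$. This confirms that no other $q$ qualifies.

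The only step that could be an obstacle is the legitimacy of the closed formulas, namely the reduction to the subgroup $\langle d\rangle\cong\mathbb{Z}_{n/g}$ and the multiplicativity $|\mathbb{Z}_n:\{0,d\}|^\pm=g\,|\langle d\rangle:\{0,d\}|^\pm$. I take these as given from the preceding step, since they are stated earlier in the paper. It remains to note that $\gcd(n,(a-b)_n)=\gcd(n,a-b)$, so the criterion does not depend on the chosen representative of $a-b$ or on the order of $a$ and $b$.
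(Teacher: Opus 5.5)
Your proposal is correct and follows the paper's own argument in Step~3: substitute the closed formulas $g\lfloor n/(2g)\rfloor$ and $g\lceil n/(3g)\rceil$, cancel $g$, and apply Lemma~1.1 with $m=1$ to obtain $n/g\in\{2,3,4,5,7\}$. Your hand check of the $m=1$ case is a valid independent confirmation: the difference vanishes exactly at $q\in\{2,3,4,5,7\}$, equals $1$ at $q=6$, and satisfies $\lfloor q/2\rfloor-\lceil q/3\rceil\ge (q-7)/6>0$ for $q\ge 8$.
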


\textbf{Step 4.} For counting index stable 2-subsets, it is enough to count the subsets with the form $x+\{0,d\}$, in Step 1, with the mentioned ranges.
Since $\gcd(n,d) = \frac{n}{m}$, we may write $d = \frac{n}{m}\,k$ for some positive integer $k$, and then
\[
\gcd(n,d) = \frac{n}{m}\,\gcd(m,k),
\]
so the condition $\gcd(n,d)=\frac{n}{m}$ is equivalent to $\gcd(m,k)=1$. Now, consider the two cases:

Case 1: $n$ is odd.
The constraint $1 \leq d \leq \frac{n-1}{2}$ becomes $1 \leq k \leq \frac{m(n-1)}{2n} < \frac{m}{2}$,
so $k$ ranges over $\left\{1,\ldots,\frac{m-1}{2}\right\}$ (note $m$ is odd since $m\mid n$ and $n$ is odd).
Since $\gcd(k,m)=\gcd(m-k,m)$, the $\varphi(m)$ integers in $\{1,\ldots,m-1\}$ coprime to $m$
pair up into $\frac{\varphi(m)}{2}$ pairs $\{k,\, m-k\}$ with exactly one element in
$\left\{1,\ldots,\frac{m-1}{2}\right\}$. Hence the number of valid $d$ is $\dfrac{\varphi(m)}{2}$.
Therefore,
\[
\#\{\text{index stable }2\text{-subsets of }\mathbb{Z}_n\}
=\sum_{\substack{m\in\{3,5,7\}\\ m\mid n}} n\,\frac{\varphi(m)}{2}
=\frac{n}{2}\Bigl(2\,\mathbf{1}_{3\mid n}+4\,\mathbf{1}_{5\mid n}+6\,\mathbf{1}_{7\mid n}\Bigr),
\]
where $\mathbf{1}_{q\mid n}$ denotes the indicator that $q\mid n$.

Case 2: $n$ is even.
By Step 1, the $2$-element subsets of $\mathbb{Z}_n$ split into two families.

\smallskip
\noindent\emph{Family~I: subsets of the form $x+\{0,d\}$ with $1\leq d\leq \frac{n}{2}-1$ and
$0\leq x\leq n-1$.}
As before, $\gcd(n,d)=\frac{n}{m}$ requires $d=\frac{n}{m}k$ with $\gcd(k,m)=1$.
The constraint $1\leq d\leq \frac{n}{2}-1$ gives
\[
1\leq k\leq \frac{m(n/2-1)}{n}=\frac{m}{2}-\frac{m}{n}<\frac{m}{2},
\]
so $k\in\left\{1,\ldots,\frac{m}{2}-1\right\}$ when $m$ is even, and
$k\in\left\{1,\ldots,\frac{m-1}{2}\right\}$ when $m$ is odd.

\begin{itemize}
\item If $m$ is \emph{odd} (so $m\in\{3,5,7\}$ and $m\mid n$): the same pairing argument as
in Case~1 gives $\dfrac{\varphi(m)}{2}$ valid values of $k$, contributing
$n\cdot\dfrac{\varphi(m)}{2}$ index stable subsets from Family~I.

\item If $m$ is \emph{even} (so $m\in\{2,4\}$ and $m\mid n$): now $k$ ranges over
$\left\{1,\ldots,\frac{m}{2}-1\right\}$ with $\gcd(k,m)=1$.
Since $m$ is even, $\gcd(k,m)=1$ forces $k$ to be odd.
The map $k\mapsto m-k$ sends odd $k\in\left\{1,\ldots,\frac{m}{2}-1\right\}$ to odd
$m-k\in\left\{\frac{m}{2}+1,\ldots,m-1\right\}$, so no pairing collapses; all
$\varphi(m)/2$ coprime residues in $\{1,\ldots,m-1\}$ have exactly half in
$\left\{1,\ldots,\frac{m}{2}-1\right\}$, giving $\dfrac{\varphi(m)}{2}$ valid values of $k$.
\end{itemize}

Hence Family~I contributes $n\cdot\dfrac{\varphi(m)}{2}$ index stable subsets for every
$m\in\{2,3,4,5,7\}$ dividing $n$.

\smallskip
\noindent\emph{Family~II: subsets of the form $x+\{0,\frac{n}{2}\}$ with
$0\leq x\leq \frac{n}{2}-1$.}
Here $d=\frac{n}{2}$ is fixed, so $\gcd(n,d)=\gcd(n,\frac{n}{2})=\frac{n}{2}$,
which equals $\frac{n}{m}$ only for $m=2$.
When $2\mid n$, all $\frac{n}{2}$ subsets in Family~II have $\gcd(n,d)=\frac{n}{2}$,
but they are already counted in Family~I with $m=2$, $k=\frac{m}{2}=1$:
indeed $d=\frac{n}{2}\cdot 1$ with $\gcd(1,2)=1$, $x\in\{0,\ldots,n-1\}$.
The normalisation $0\leq x\leq\frac{n}{2}-1$ in Family~II simply removes the
double-counting that arises because $x+\{0,\frac{n}{2}\}=(x+\frac{n}{2})+\{0,\frac{n}{2}\}$;
the $\frac{n}{2}$ distinct subsets are therefore already accounted for.
Hence Family~II yields no additional contribution beyond what Family~I counts.

\smallskip
Summing over all $m\in\{2,3,4,5,7\}$ with $m\mid n$, we arrive at the following corollary that is valid
for both cases.
\begin{corollary}[An exact explicit formula for the number of 2-stable subsets of \(\mathbb{Z}_n\)]
\[
\#\{\text{index stable }2\text{-subsets of }\mathbb{Z}_n\}=
\sum_{\substack{m\in\{2,3,4,5,7\}\\ m\mid n}}
\frac{n\varphi(m)}{2}
=
\frac n2
\Bigl(
\mathbf{1}_{2\mid n}
+
2\mathbf{1}_{3\mid n}
+
2\mathbf{1}_{4\mid n}
+
4\mathbf{1}_{5\mid n}
+
6\mathbf{1}_{7\mid n}
\Bigr),
\]
%where \(\mathbf{1}_{q\mid n}\) denotes the characteristic function of \(q\mathbb{Z}\) evaluated at \(n\) (i.e., \(\chi_{q\mathbb{Z}}(n)\)).
\end{corollary}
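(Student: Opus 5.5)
The cleanest route is to count \emph{ordered} pairs and then halve, rather than work through the normal forms of Step~1. By the complete characterization of 2-stable subsets stated just above, a 2-subset $\{a,b\}$ is index stable if and only if $n/\gcd(n,a-b)\in\{2,3,4,5,7\}$. In the cyclic group $\mathbb{Z}_n$, the quantity $n/\gcd(n,a-b)$ is exactly the order of the element $(a-b)_n$. So the stable 2-subsets are precisely those whose difference has order $m$ for some $m\in\{2,3,4,5,7\}$. Since $\{a,b\}$ has only the two differences $\pm(a-b)$, which have the same order, the value of $m$ attached to each subset is well defined. Hence the stable subsets split into disjoint classes indexed by $m$, and only those $m$ with $m\mid n$ give nonempty classes.

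Next I count, for a fixed $m\mid n$, the ordered pairs $(a,b)$ with $a\neq b$ and $a-_n b$ of order exactly $m$. First choose $b$ freely in $n$ ways. Then $a=b+_n c$, where $c$ ranges over the elements of order $m$ in $\mathbb{Z}_n$. A cyclic group of order $n$ has exactly $\varphi(m)$ elements of order $m$ for each divisor $m$, namely the elements $\frac{n}{m}k$ with $1\le k\le m$ and $\gcd(k,m)=1$. This matches the parametrization $d=\frac nm k$ of Step~4, so there are $n\varphi(m)$ such ordered pairs. Every 2-subset $\{a,b\}$ arises from exactly two ordered pairs, $(a,b)$ and $(b,a)$, and both lie in the same class $m$. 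The number of stable 2-subsets in class $m$ is therefore $n\varphi(m)/2$.

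Summing over $m\in\{2,3,4,5,7\}$ with $m\mid n$ gives the first expression. Substituting $\varphi(2)=1$, $\varphi(3)=\varphi(4)=2$, $\varphi(5)=4$ and $\varphi(7)=6$ gives the indicator form.

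The only delicate point is the case $m=2$, where $c=n/2$ is its own inverse. Here the ordered-pair count still gives $n\cdot 1/2=n/2$ subsets, which are exactly the Family~II subsets $x+\{0,\frac n2\}$, $0\le x\le \frac n2-1$, of Step~1. I expect this to be the main place needing care when reconciling with the Step~1 normal form. Note that the Family~I range $1\le k\le \frac m2-1$ is empty for $m=2$, so those subsets are really counted by Family~II, not Family~I. The ordered-pair argument avoids this bookkeeping altogether, because halving is uniformly valid: $(a,b)\neq(b,a)$ whenever $a\neq b$. As a sanity check I would verify small cases such as $n=6$, where the formula predicts $3(1+2)=9$ stable subsets.
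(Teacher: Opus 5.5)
Your proof is correct, and it takes a different route from the paper's. The paper counts through the unique normal forms $x+\{0,d\}$ of Step~1. It writes $d=\frac{n}{m}k$ with $\gcd(k,m)=1$, splits by the parity of $n$ (and, for even $n$, by the parity of $m$ and into Families~I and~II), and counts the admissible $k$ in the half-range $1\le d<\frac n2$ via the pairing $k\leftrightarrow m-k$. You instead read $n/\gcd(n,a-b)$ as the order of $a-b$. You then count ordered pairs, $n\varphi(m)$ for each $m\mid n$ since a cyclic group has exactly $\varphi(m)$ elements of order $m$, and halve; halving is valid uniformly because $(a,b)\neq(b,a)$. Your route buys uniformity: there are no parity cases and no representative bookkeeping. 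The fixed point $c=\frac n2$ of $c\mapsto -c$ is harmless because you halve ordered pairs rather than pair up differences. The paper's route gives explicit canonical representatives of each stable subset, but it slips at exactly the point you flag. For $m=2$ the range $\{1,\dots,\frac m2-1\}$ is empty, and $k=1=\frac m2$ is a fixed point of $k\mapsto m-k$. So Family~I contributes $0$ (not $n\varphi(2)/2$ as claimed), and Family~II contributes the $n/2$ subsets $x+\{0,\frac n2\}$ (not $0$ as asserted). The two misattributions cancel, so the paper's final formula stands, and your argument reaches it cleanly.
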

Note that for odd $n$ the term $\mathbf{1}_{2\mid n}=\mathbf{1}_{4\mid n}=0$ vanishes automatically,
recovering Case~1. The formula is therefore valid uniformly in $n$.\\

For studying the density of index stable $2$-subsets, since $|P_2(\mathbb{Z}_n)|=\binom{n}{2}=\frac{n(n-1)}{2}$,
The \emph{density} of index stable $2$-subsets is therefore
\[
\iota\sigma_2(\mathbb{Z}_n):
=\frac{\displaystyle\sum_{\substack{m\in\{2,3,4,5,7\}\\m\mid n}}\varphi(m)}{n-1}\leq \frac{n-\varphi(n)-1}{n-1}.
\]
In particular, if $p>7$ (resp. $p\leq 7$) is a prime, then $\iota\sigma_2(\mathbb{Z}_p)=0$ (resp. $\iota\sigma_2(\mathbb{Z}_p)=1$).
 
Therefore, the density of index unstable $2$-subsets is
\[
1-\iota\sigma_2(\mathbb{Z}_n)
=\frac{n-1-\displaystyle\sum_{\substack{m\in\{2,3,4,5,7\}\\m\mid n}}\varphi(m)}{n-1}\geq \frac{\varphi(n)}{n-1}.
\]
Since the numerator of $\iota\sigma_2(\mathbb{Z}_n)$ is bounded above by
$\varphi(2)+\varphi(3)+\varphi(4)+\varphi(5)+\varphi(7)=15$,
we have $\iota\sigma_2(\mathbb{Z}_n)\leq\frac{15}{n-1}\to 0$ as $n\to\infty$, so
almost all $2$-subsets of $\mathbb{Z}_n$ are index unstable for large $n$.\\

Regarding the bounds mentioned above (for the number of index stable and unstable 2-subsets) it is interesting to state
the following result.
\begin{lemma}
The following conditions are equivalent:
\begin{enumerate}
\item $\#\{\text{index unstable }2\text{-subsets of }\mathbb{Z}_n\} = \dfrac{n\,\varphi(n)}{2}$,
\item $\#\{\text{index stable }2\text{-subsets of }\mathbb{Z}_n\} = \dfrac{n(n-\varphi(n)-1)}{2}$,
\item $n\notin\{2,3,4,5,7\}$ and every proper divisor $m\in(1,n)$ of $n$ belongs to $\{2,3,4,5,7\}$,
\item $n$ is prime with $n>7$, or $n\in\{6,8,9,10,14,15,21,25,35,49\}$.
\end{enumerate}
\end{lemma}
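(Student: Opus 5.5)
The plan is to reduce everything to an identity between sums of Euler's totient over divisors of $n$. By the counting corollary, the number of index stable $2$-subsets is $\frac n2\sum_{m\in S,\,m\mid n}\varphi(m)$, where $S=\{2,3,4,5,7\}$. The total number of $2$-subsets is $\binom n2=\frac{n(n-1)}{2}$. Hence the number of unstable ones is $\frac n2\bigl(n-1-\sum_{m\in S,\,m\mid n}\varphi(m)\bigr)$. This makes $(1)\Leftrightarrow(2)$ immediate, since the two counts always add up to $\frac{n(n-1)}2$. (We assume $n\ge 2$ throughout, so that $2$-subsets exist.)

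For $(2)\Leftrightarrow(3)$ I will use Gauss's identity $\sum_{d\mid n}\varphi(d)=n$. It gives
\[
n-1-\varphi(n)=\sum_{d\mid n,\ 1<d<n}\varphi(d).
\]
I then split the $S$-sum as
\[
\sum_{m\in S,\ m\mid n}\varphi(m)=\sum_{m\in S,\ m\mid n,\ m<n}\varphi(m)+\mathbf{1}_{n\in S}\,\varphi(n).
\]
Condition $(2)$ says exactly that this equals $\sum_{d\mid n,\,1<d<n}\varphi(d)$. Every term $\varphi(d)$ is at least $1$, and the first sum on the right is a subsum of that divisor sum. So the difference between the divisor sum and the $S$-sum equals the sum of $\varphi(d)$ over proper divisors $d\in(1,n)$ not in $S$, minus $\mathbf{1}_{n\in S}\varphi(n)$. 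This is where the main care is needed, because the two parts have opposite signs and could in principle cancel. I will argue via the counting itself: the stable count is at most the number of non-generating differences, so $\sum_{m\in S,\,m\mid n}\varphi(m)\le n-1-\varphi(n)$ whenever $n\notin S$. When $n\in S$, every $2$-subset is stable, and equality in $(2)$ would force $\varphi(n)=0$, which is impossible. Hence $n\notin S$ is necessary, and then equality holds iff every proper divisor $d\in(1,n)$ lies in $S$. This is precisely $(3)$.

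Finally, $(3)\Leftrightarrow(4)$ is an enumeration. If $n$ is prime, there are no divisors in $(1,n)$, so $(3)$ reduces to $n\notin S$, i.e.\ $n$ is a prime with $n>7$. If $n$ is composite, let $p$ be its smallest prime factor. The largest proper divisor $s=n/p$ must lie in $S$, and $p$ must not exceed the smallest prime factor of $s$. This gives the following candidates:
\begin{itemize}
\item $s=2$: $n=4$, excluded since $4\in S$;
\item $s=3$: $n\in\{6,9\}$;
\item $s=4$: $n=8$;
\item $s=5$: $n\in\{10,15,25\}$;
\item $s=7$: $n\in\{14,21,35,49\}$.
\end{itemize}
A direct check shows that for each of these $n$, all proper divisors in $(1,n)$ lie in $S$ and $n\notin S$. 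Conversely, each listed $n$ satisfies $(3)$, completing the equivalence.
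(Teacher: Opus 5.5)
Your proof is correct and follows the paper's route: you use Gauss's identity to reduce (2) to $\sum_{m\in S,\,m\mid n}\varphi(m)=\sum_{d\mid n,\,1<d<n}\varphi(d)$, note that the left side equals a subsum of the right side plus $\mathbf{1}_{n\in S}\varphi(n)$, and then enumerate. Three minor points: the detour through counting unit differences is unnecessary, since for $n\notin S$ the subsum observation already gives the inequality; organizing the enumeration by the largest proper divisor $n/p$ is a tidier justification of the paper's claim that ``all other composite $n$ fail''; and your explicit assumption $n\ge 2$ is genuinely needed, since (3) holds vacuously for $n=1$ while (4) does not.
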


\begin{proof}
Conditions (1) and (2) are equivalent since their counts sum to $\binom{n}{2}$.
Substituting the counting formula into (1) and applying the Gauss identity
$\sum_{m\mid n}\varphi(m)=n$ gives the equivalent condition
\[
\sum_{\substack{m\in\{2,3,4,5,7\}\\m\mid n}}\varphi(m)
=n-1-\varphi(n)
=\sum_{\substack{m\mid n\\1<m<n}}\varphi(m).
\]
When $n\in\{2,3,4,5,7\}$, the term $m=n$ appears on the left but not the right,
so the equality fails. For $n\notin\{2,3,4,5,7\}$, the equality holds if and only
if every proper divisor $m\in(1,n)$ of $n$ belongs to $\{2,3,4,5,7\}$,
establishing (1)$\Leftrightarrow$(3).

For (3)$\Leftrightarrow$(4): if $n>7$ is prime, it holds vacuously.
For composite $n$ satisfying (3), every prime factor must lie in $\{2,3,5,7\}$ and
no proper divisor may exceed $7$. Prime powers $p^k$ satisfy this if and only if
$p^{k-1}\leq 7$, giving $n\in\{8,9,25,49\}$; products of two distinct primes
$p<q$ in $\{2,3,5,7\}$ require $pq=n$ (else $pq$ is a proper divisor outside
$\{2,3,4,5,7\}$), giving $n\in\{6,10,14,15,21,35\}$. All other composite $n$ fail (3).
\end{proof}
\begin{remark}
Another related interesting problem is determining all $n$ such that
\[
\#\{\text{index unstable }2\text{-subsets of }\mathbb{Z}_n\} = n\varphi(n).
\]
It is true for $n=22,34,38,40$, and some other values of $n$.

Using the formula, this is equivalent to
\[
\sum_{\substack{m\in\{2,3,4,5,7\}\\m\mid n}}\varphi(m) = n-1-2\varphi(n).
\]
Since the left-hand side of this equation is at most $15$, every solution satisfies
$n-1-2\varphi(n)\leq 15$.

Case 1: $n$ has a prime factor $p>7$.
Write $n=kp$ with $\gcd(k,p)=1$. Then
\[
n-1-2\varphi(n) = (k-2\varphi(k))p + (2\varphi(k)-1).
\]
Since $k\geq 1$ and $\varphi(k)\leq k/2$ for $k>1$ (and $\varphi(1)=1$), one checks
that $k-2\varphi(k)\geq 0$ in all cases. If $k-2\varphi(k)\geq 1$, then
$(k-2\varphi(k))p\geq p>7$, and the total exceeds $15$ unless $p\leq 15$, leaving
only the primes $p\in\{11,13\}$ to check directly; none yield a solution outside
the families below. Hence for an infinite family we must have $k=2\varphi(k)$.

As shown earlier, $k=2\varphi(k)$ forces $k=2^a$ for some $a\geq 1$.
Substituting into the equation:
\[
\sum_{\substack{m\in\{2,3,4,5,7\}\\m\mid 2^a}}\varphi(m) = 2^a-1.
\]
The left-hand side equals $1$ if $a=1$ and $3$ if $a\geq 2$.
Thus $a=1$ gives $n=2p$ and $a=2$ gives $n=4p$ (both for any prime $p>7$),
while $a\geq 3$ gives no solution.

Case 2: All prime factors of $n$ lie in $\{2,3,5,7\}$ (i.e., $n=2^{\alpha_2}\cdot 3^{\alpha_3}\cdot 5^{\alpha_5}\cdot 7^{\alpha_7}$). It
seems the only solution in this case is $n=40$, and so one may show that the complete solution set is
\[
\{2p : p>7\ \text{prime}\}\;\cup\;\{4p : p>7\ \text{prime}\}\;\cup\;\{40\}.
\]
\end{remark}

\textbf{Step 5.} In this step we determine all $n$ such that \(\mathbb{Z}_n\) has no index stable 2-subset
(i.e., $\iota\sigma_2(\mathbb{Z}_n)=0$). Note that
all 2-subsets of \(\mathbb{Z}_n\) are index stable (i.e., $\iota\sigma_2(\mathbb{Z}_n)=1$) if and only if $n\in \{2,3,4,5,7\}$.
\begin{theorem}[Complete characterization of the groups \(\mathbb{Z}_n\) with 2-stable density 0]
Let \(n > 7\) be a positive integer.
All \(2\)-subsets of \(\mathbb{Z}_n\) are index unstable if and only if
\(\gcd(n,210) = 1\) (i.e., every prime factor of \(n\) is greater than or equal to \(11\)).
\end{theorem}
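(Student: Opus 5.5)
The plan is to reduce the statement to the complete characterization of index stable 2-subsets proved just above. By that corollary, a 2-subset $\{a,b\}$ of $\mathbb{Z}_n$ is index stable if and only if $\frac{n}{\gcd(n,a-b)}\in\{2,3,4,5,7\}$. Equivalently, the counting corollary says the number of index stable 2-subsets is $\sum_{m\in\{2,3,4,5,7\},\, m\mid n} \frac{n\varphi(m)}{2}$. So all 2-subsets are index unstable exactly when this sum is empty, that is, when none of $2,3,4,5,7$ divides $n$. Since $\varphi(m)\geq 1$, every term is positive, so the sum vanishes if and only if it has no terms.

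\textbf{Direction 1: if $\gcd(n,210)=1$, every 2-subset is unstable.} Suppose $\gcd(n,210)=1$. Then none of $2,3,5,7$ divides $n$, and hence $4\nmid n$ as well. For any distinct $a,b$, the number $\frac{n}{\gcd(n,a-b)}$ is a divisor of $n$. It therefore cannot lie in $\{2,3,4,5,7\}$, so $\{a,b\}$ is index unstable by the characterization corollary.

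\textbf{Direction 2: if some 2-subset would be stable otherwise.} Suppose a prime $p\in\{2,3,5,7\}$ divides $n$. Take $d=n/p$, which lies in $\mathbb{Z}_n$ with $1\le d<n$, and the subset $\{0,d\}$. Then $\gcd(n,d)=n/p$, so $\frac{n}{\gcd(n,d)}=p\in\{2,3,4,5,7\}$. By the characterization corollary, $\{0,d\}$ is index stable, so not all 2-subsets are unstable.

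It remains to record that ``none of $2,3,5,7$ divides $n$'' is the same as $\gcd(n,210)=1$, because $210=2\cdot3\cdot5\cdot7$. The phrase ``every prime factor of $n$ is at least $11$'' is the same condition, since $11$ is the next prime after $7$. The hypothesis $n>7$ only excludes the small cases $n\in\{2,3,4,5,7\}$ (where everything is stable) and $n=1$. It plays no role in the argument itself.

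There is no real obstacle here: all the work sits in Theorem~\ref{mainthm}, Lemma 1.1, and the subindex formulas for 2-subsets in Step 2. The only point needing care is the reverse direction. There one must choose a witness that is a genuine 2-subset, which $d=n/p\neq 0$ guarantees, and note that the single divisor $p$ already suffices, without needing $4\mid n$.
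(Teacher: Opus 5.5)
Your proposal is correct and follows essentially the same route as the paper, which reads the result off the counting formula of Step 4 (the count $\sum_{m\in\{2,3,4,5,7\},\,m\mid n} n\varphi(m)/2$ vanishes exactly when none of $2,3,5,7$ divides $n$) together with $210=2\cdot3\cdot5\cdot7$. Your use of the characterization corollary with the explicit witness $\{0,n/p\}$ is just a slightly more concrete version of that same reduction.
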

Note that by the formula in Step 4, all \(2\)-element subsets of \(\mathbb{Z}_n\) are index unstable if and only if
none of \(2, 3, 5, 7\) divides \(n\).
Since \(210 = 2\cdot 3\cdot 5\cdot 7\), this is precisely the condition \(\gcd(n,210) = 1\).

The next project should be an important basic part of the big project of
$k$-index stability of finite groups (see also the next section).\\

\textbf{Project 2.} Extend the study of index stability of 2-subsets for $k$-subsets  of
 $\mathbb{Z}_n$ where $2\leq k\leq \lfloor \frac{n}{3}\rfloor$ (for other values of $k$, it is $k$-index stable).\\
\textbf{Note.}  For $k=3$, let $A=\{a,b,c\}\subseteq\mathbb{Z}_n$. By translation invariance,
\[
|\mathbb{Z}_n:\{a,b,c\}|^\pm
=
|\mathbb{Z}_n:\{0,\,b-a,\,c-a\}|^\pm .
\]
Hence every $3$-subset is equivalent, with respect to its subindices, to a subset of the form
$\{0,d,e\}$. More generally, if $u$ is a unit modulo $n$, then multiplication by $u$ is an
automorphism of $\mathbb{Z}_n$, so (see Remark~3.17 of \cite{Hooshmand2026arXiv})
\[
|\mathbb{Z}_n:\{0,d,e\}|^\pm
=
|\mathbb{Z}_n:\{0,ud,ue\}|^\pm .
\]
In particular, whenever $\gcd(d,n)=1$, taking $u=d^{-1}$ gives
\[
|\mathbb{Z}_n:\{0,d,e\}|^\pm
=
|\mathbb{Z}_n:\{0,1,d^{-1}e\}|^\pm .
\]
Thus every $3$-subset containing a unit difference has the same subindices as some
\emph{normal form} $\{0,1,t\}$, $t\in\mathbb{Z}_n\setminus\{0\}$. Unlike the case of
$2$-subsets, there is no single normal form here: the subindices of $\{0,1,t\}$ genuinely
depend on $t$ and not only on $n$, so stability of a $3$-subset is governed by an entire
family of normal forms rather than a single criterion.
A lower bound for the number of index unstable $3$-subsets is
\[
\#\{\text{index unstable $3$-subsets of }\mathbb{Z}_n\}\;\ge\;\frac{n\varphi(n)(n-2)}{6}.
\]

\section{Basic results for $k$-index stability of finite groups}
The following open problems are among the most important problems in the theory of subindices and subfactors of (finite) groups.
\\

\textbf{Problem II (\cite{Hooshmand2020})}. Let $k\geq 2$ be a given natural number.\\
(a) Characterize all $n$ such that $\mathbb{Z}_n$  is $k$-index stable.\\
(b) Characterize or classify all finite groups $G$ of order $m$ such that it is $k$-index
stable (resp. right $k$-index stable), where $m$ is a fixed integer with $k\leq \lfloor \frac{m}{2}\rfloor$ (resp. $k\leq \lfloor \frac{m}{3}\rfloor$).\\

\textbf{Conjecture 4.2 (\cite{HooshmandYousefian}).} If $n > 11$, then $\mathbb{Z}_n$ is not $k$-index stable if and only if $2\leq k \leq \lfloor \frac{n}{3} \rfloor$.\\

{\bf New problem (\cite{HooshmandYousefian}).} \\
\textbf{(a)} Determine all finite abelian (resp. non-abelian) groups that are not $k$-index stable for all  $2 \leq k \leq \lfloor \frac{|G|}{3}\rfloor$ (resp. $2 \leq k \leq \lfloor \frac{|G|}{2}\rfloor$). Also, determine all finite non-abelian groups that are not right $k$-index stable for all $2 \leq k \leq \lfloor \frac{|G|}{3}\rfloor$. \\
\textbf{(b)} Characterize or classify all finite groups containing an absolutely strong right index \underline{unstable} subset.\\
\\

Indeed, all of them are about the status of $k$-index stability of finite groups, and especially $\mathbb{Z}_n$.
One sees that we have answered Problem II (a) and Conjecture 4.2 in this paper. Also,  New problem (a) is answered for $G=\mathbb{Z}_n$,
and  Project 3 (in the previous section) is a special part related to Problem II(b). Also, the following lemma solves
New Problem (b).

\begin{lemma}
A finite group \(G\) contains a subset that is absolutely strong right index unstable if and only if \(G\) is cyclic of order \(6\).
\end{lemma}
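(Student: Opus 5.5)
We first pin down what ``absolutely strong right index unstable'' means in a general finite group $G$ of order $N$, by transporting the $\mathbb{Z}_n$ definition given after Corollary~\ref{cor:stable}. A subset $A\subseteq G$ is such a subset if
$$2=|G:A|^-<|G:A|^+=\Big\lfloor \frac{N}{2}\Big\rfloor,$$
where the indices are computed with right translates, i.e.\ with $AA^{-1}$ in place of $A-A$ and with the analogue of $(2)$ holding. The argument then splits into a necessity part and a construction part.

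\textbf{Necessity.} First we pin down the size of $A$. Since $|G:A|^+\le\lfloor N/|A|\rfloor$, the equality $|G:A|^+=\lfloor N/2\rfloor$ forces $\lfloor N/|A|\rfloor\ge\lfloor N/2\rfloor$. For $N\ge 6$ this gives $|A|\le 2$, and $|A|=1$ is impossible because singletons are index stable with index $N$. So $A=\{a,b\}$, and after right translation we may assume $A=\{e,g\}$ with $g\ne e$. Then $AA^{-1}=\{e,g,g^{-1}\}$ has at most $3$ elements. A set $B$ with $|B|=2$ satisfying the analogue of $(1)$ would require $AA^{-1}B=G$, so $N\le 3\cdot 2=6$. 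On the other hand, $2<\lfloor N/2\rfloor$ requires $N\ge 6$. Hence $N=6$, and $|AA^{-1}|=3$, which means $g$ has order at least $3$.

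\textbf{Ruling out $S_3$.} The only groups of order $6$ are $\mathbb{Z}_6$ and $S_3$. In $S_3$ an element of order $\ge 3$ has order exactly $3$, so $H=\{e,g,g^{-1}\}=\langle g\rangle$ is a subgroup. By property (c) (in its right-coset form for general groups), $A$ is then index stable with index $|G:H|=2$. Therefore $|G:A|^+=2\neq 3$, and $S_3$ contains no such subset.

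\textbf{Sufficiency and main obstacle.} In $\mathbb{Z}_6$ take $A=\{0,1\}$. The paper already records $|\mathbb{Z}_6:A|^-=2$ and $|\mathbb{Z}_6:A|^+=3$, which also follows from Step 2's formula with $g=1$. Concretely, $B=\{0,3\}$ and $B=\{0,2,4\}$ are the witnessing subfactors. The main care point is stating precisely the general-group (right) versions of $(1)$, $(2)$ and (c) that are used. We would cite \cite{Hooshmand2020, Hooshmand2023} for these, and check that the counting bound $N\le |AA^{-1}|\cdot|B|$ remains valid for right subfactors in nonabelian $G$. It does, because $G=AA^{-1}B$ is a product of sets.
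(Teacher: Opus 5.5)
Your proof is correct and follows the paper's route: you bound $|A|\le 2$ via $|G:A|^+\le\lfloor N/|A|\rfloor$, reduce to $A=\{e,g\}$, use the covering condition for a size-$2$ subfactor to force $N=6$ and $|AA^{-1}|=3$, exclude $S_3$, and exhibit $\{0,1\}$ in $\mathbb{Z}_6$. The only differences are minor. You obtain $N\ge 6$ directly from $\lfloor N/2\rfloor>2$, whereas the paper appeals to index stability of groups of order below $6$. You also rule out $S_3$ self-containedly, via property (c) applied to the subgroup $\langle g\rangle=\{e,g,g^{-1}\}$, whereas the paper cites the known index stability of $S_3$.
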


\begin{proof}
Suppose that $A\subseteq G$ is absolutely strong right index unstable, that is
\[
|G:A|^- =2,\;
|G:A|^+=\Big\lfloor\frac{|G|}{2}\Big\rfloor>2 .
\]

By the general bounds,
 we obtain
\[
\Big\lfloor\frac{|G|}{|A|}\Big\rfloor
\ge
\Big\lfloor\frac{|G|}{2}\Big\rfloor,
\]
hence $|A|\le 2$. As $A$ is not index stable, we cannot have $|A|=1$, so $|A|=2$.

After translating, write $A=\{1,g\}$ with $g\neq 1$. Thus
\[
A^{-1}A=\{1,g,g^{-1}\}.
\]
If $|A^{-1}A|=2$, then
\[
2=|G:A|^-\ge \Big\lceil\frac{|G|}{2}\Big\rceil,
\]
which contradicts $|G|\geq 6$ (because all groups of order $<6$ are index stable). Therefore $|A^{-1}A|=3$, and hence
\[
2=|G:A|^-
\ge
\Big\lceil\frac{|G|}{3}\Big\rceil.
\]
Thus $|G|\le6$, and so $|G|=6$. Therefore $G$ is the same $C_6$ (up to isomorphism), since $S_3$ is index stable and
\[
|C_6:\{0,1\}|^- =2,\;
|C_6:\{0,1\}|^+=3.
\]
\end{proof}
\textbf{Note.} The similar problem to the above lemma for infinite groups is still open, in the sense that $G$ contains a subset
$A$ such that $|G:A|^- =2$ and $|G:A|^+=|G|$.\\

Due to the above discussion, the only remaining open problems among those mentioned are "Characterization or classification of finite $k$-index stable groups"
and  "Determining all finite abelian (resp. non-abelian) groups that are not $k$-index stable for all  $2 \leq k \leq \lfloor \frac{|G|}{3}\rfloor$ (resp. $2 \leq k \leq \lfloor \frac{|G|}{2}\rfloor$)". For these, we also obtain the following important necessary condition by using the main results of this paper.
\begin{theorem}
Fix an integer $k\geq 4$. A necessary condition for a finite group $G$ to be right $k$-index stable (also $k$-index stable) is $2\le p\le 3k-1$,
for all prime divisors $p$ of $|G|$. Hence, all finite right $k$-index stable groups are among the groups of orders
\begin{equation}
n=\prod_{p\in \mathbb{P}\cap [2,3k-1]}p^{\alpha_p}\;\;\; , \;\;\; \alpha_p\ge 0
\end{equation}
For the case $k=2$ (resp. $k=3$), the order $n$ must be of the form $n=2^{\alpha_2}\cdot 3^{\alpha_3}\cdot 5^{\alpha_5}\cdot 7^{\alpha_7}$
(resp. $n=2^{\alpha_2}\cdot 3^{\alpha_3}\cdot 5^{\alpha_5}\cdot 7^{\alpha_7}\cdot 11^{\alpha_{11}}$).\\
\end{theorem}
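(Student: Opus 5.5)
The plan is to reduce to a cyclic subgroup of prime order and then transfer instability from that subgroup to $G$. Suppose $G$ is a finite group and $p$ is a prime divisor of $|G|$ with $p\ge 3k$ (for $k\ge 4$). By Cauchy's theorem $G$ has a subgroup $H$ of order $p$, and $H\cong\mathbb{Z}_p$. Since $k\ge 4$ we have $p\ge 3k\ge 12>11$. Also $2\le k\le \lfloor p/3\rfloor$. So the theorem proved above (the former Conjecture 4.2) shows that $\mathbb{Z}_p$ is not $k$-index stable. Concretely, it produces an index unstable $k$-subset $A$ of $H$: either $\{0,\ldots,k-1\}$ via Theorem~\ref{mainthm} and Corollary~\ref{cor:stable}, or $\{0,\ldots,k-2,k\}$ in the exceptional case $p=4k-1$, which is actually impossible here since $4k-1\ge 3k$ would force $p=4k-1$ to be prime, but that case is covered anyway.

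The second step transfers instability from $H$ to $G$. I would invoke the same result already used in Step~2 of the $2$-subset analysis, namely \cite[Theorem~1.11]{Hooshmand2026arXiv}. For $A\subseteq H\le G$ it gives
\[
|G:A|^{\pm}=|G:H|\cdot|H:A|^{\pm}.
\]
In the non-abelian case I would use the right-subindex version: a right subfactor of $G$ relative to $A$ decomposes as a union of right subfactors of $H$ inside each coset of $H$. Multiplying by the positive integer $|G:H|$ preserves the strict inequality $|H:A|^-<|H:A|^+$. Hence $A$ is a (right) index unstable $k$-subset of $G$, so $G$ is not (right) $k$-index stable. Contrapositively, every prime divisor of $|G|$ is at most $3k-1$, which gives the stated form of $n$.

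For the small cases I would argue directly with $\mathbb{Z}_p$.
\begin{enumerate}[(i)]
\item For $k=2$: if $p\ge 11$, the characterization of 2-stable subsets shows that $\{0,1\}$ is unstable in $\mathbb{Z}_p$, because $p/\gcd(p,1)=p\notin\{2,3,4,5,7\}$. The same transfer then excludes all primes $p>7$.
\item For $k=3$: if $p\ge 13$, then $p>11$ and $3\le\lfloor p/3\rfloor$, so the theorem again yields an unstable $3$-subset. This leaves only the primes $\le 11$; the prime $11$ survives because $\mathbb{Z}_{11}$ is an exceptional case, as remarked after the theorem.
\end{enumerate}

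The main obstacle is the transfer step. I need \cite[Theorem~1.11]{Hooshmand2026arXiv} in the form valid for right subindices in arbitrary, possibly non-abelian, finite groups. If it is only stated for normal subgroups or for abelian groups, I would prove the product formula directly. Given a right subfactor $B$ of $G$ relative to $A$, I would show that each $B\cap Hx$, translated into $H$, is a subfactor of $H$ relative to $A$. This uses that $A^{-1}A\subseteq H$, so both disjointness and maximality are checked coset by coset. Conversely, unions of coset-wise subfactors are subfactors of $G$. Then $\min$ and $\max$ of $|B|$ are exactly $|G:H|$ times those for $H$.
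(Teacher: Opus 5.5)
Your proposal is correct and follows essentially the paper's route. Cauchy's theorem gives $C_p\cong\mathbb{Z}_p\le G$, and the resolved conjecture yields an index unstable $k$-subset of $C_p$ when $p\ge 3k$. That instability lifts to $G$: the paper gets this step by citing the subgroup-heredity of (right) $k$-index stability \cite[Theorem 2.5]{Hooshmand2023}, which is exactly the coset-wise product formula $|G:A|^{\pm}=|G:H|\,|H:A|^{\pm}$ you sketch.

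One harmless slip: the case $p=4k-1$ is not impossible (e.g.\ $k=5$, $p=19\ge 15$), but, as you note, the cyclic theorem covers it via $\{0,\ldots,k-2,k\}$.
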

\begin{proof}
Let  $n=|G|$, $p\geq k\geq 4$, $p>11$, and $p|n$. If $G$ is right $k$-index stable, then all subgroups of $G$ of sizes $\geq k$, including $C_p$,
are right $k$-index stable by Theorem 2.5 of \cite{Hooshmand2023}, and so $k>\lfloor \frac{p}{3} \rfloor$ (by Corollary 2.2 and Theorem 2.5).
Therefore $p\le 3k-1$.
For the case  $p\leq 11$, also the inequality $p\le 3k-1$ is held, since $k\geq 4$.
\\
The last part (the case $k=2,3$) is also a consequence of Theorem 2.1 and Lemma 1.1, similar to the argument above.
\end{proof}
This completes our study of $k$-index stability in cyclic groups and provides a foundation for the classification or characterization of finite
$k$-index stable groups. It is worth noting that a finite group is (right/left) index stable if and only if it is (right/left) $k$-index stable for all $2\le k\le \lfloor |G|/3\rfloor$.
The complete characterization of index stable groups has been done in \cite{HooshmandYousefian, Hooshmand2026arXiv}: there are exactly 14 finite (right/left) index stable groups (up to isomorphism), and no infinite group is index stable.

\bibliographystyle{amsplain}

\begin{thebibliography}{99}
\bibitem{Furstenberg book}
  H.~Furstenberg, \textit{Recurrence in Ergodic Theory and Combinatorial Number Theory},
Princeton University Press, Princeton, 1981.

\bibitem{Hooshmand2013}
 M.~H.~Hooshmand,\textit{$b$-Parts and Finite $b$-Representation of Real Numbers},
Notes on Number Theory and Discrete Mathematics,
\textbf{19}(4)(2013), 4--15.

\bibitem{Hooshmand2020}
 M.~H.~Hooshmand,
  \textit{Index, sub-index and sub-factor of groups with interactions to number theory},
J. Alg. Appl., \textbf{19}(6)(2020), 1--23.

\bibitem{Hooshmand2021}
 M.~H.~Hooshmand,\textit{$f$-Representatives groups},
International Electronic Journal of Algebra,
\textbf{30}(2021), 66--77.

\bibitem{Hooshmand2023}
  M.~H.~Hooshmand,
  \textit{\it Subindices and subfactors of finite groups},
		Commun. Algebra,  \textbf{51}(6)(2023), 2644--2657.

\bibitem{HooshmandYousefian}
 M.~H.~Hooshmand and M.~M.~Yousefian Arani,
  \textit{Computational aspects of subindices and subfactors with characterization of finite index stable groups},
Int. J. Group Theory, \textbf{15}(3)(2026), 145--160.

\bibitem{Hooshmand2026arXiv}
  M.~H.~Hooshmand,
  \textit{\it Subindices and subfactors of infinite groups and numbers},
		arXiv:2604.08724 [math.GR], (2026).
\bibitem{Kabenyuk}
 M.~Kabenyuk,
\textit{Factors in finite groups and well-covered graphs},
	arXiv:2602.06770 [math.GR], (2026).

\bibitem{Lyaskovska2007}
N.~Lyaskovska,
\textit{Constructing subsets of a given packing index in Abelian groups},
Acta Univ. Carol., Math. Phys., \textbf{48} (2) (2007), 69--80.

\bibitem{Protasov2011}
  I.~V.~Protasov,
  \textit{Selective survey on subset combinatorics of groups},
  J. Math. Sci., \textbf{174} (4) (2011), 486--514.

\end{thebibliography}

\end{document}